\newif\ifrendertodo
\def\layersep{1.5cm}
\declaretheorem[name=Theorem,
refname={theorem,theorems},
Refname={Theorem,Theorems}
, numberwithin=section]{thm}
\declaretheorem[name=Lemma,
refname={lemma,lemmas},
Refname={Lemma,Lemmas}, sibling=thm]{lem}
\declaretheorem[name=Definition,
refname={definition,definitions},
Refname={Definition,Definitions}, sibling=thm]{definition}
\declaretheorem[name=Remark,
refname={remark, remarks},
Refname={Remark, Remarks}, sibling=thm]{remark}
\declaretheorem[name=Assumption,
refname={assumption, assumptions},
Refname={Assumption, Assumptions}, sibling=thm]{assumption}
\crefname{thm}{theorem}{theorems}
\Crefname{thm}{Theorem}{Theorems}
\crefname{lem}{lemma}{lemmas}
\Crefname{lem}{Lemma}{Lemmas}
\crefname{prop}{proposition}{propositions}
\Crefname{prop}{Proposition}{Propositions}
\crefname{cor}{corollary}{corollaries}
\Crefname{cor}{Corollary}{Corollaries}
\crefname{definition}{definition}{definitions}
\Crefname{definition}{Definition}{Definitions}
\crefname{conj}{conjecture}{conjectures}
\Crefname{conj}{Conjecture}{Conjectures}
\crefname{notation}{notation}{notations}
\Crefname{notation}{Notation}{Notations}
\crefname{remark}{remark}{remarks}
\Crefname{remark}{Remark}{Remarks}
\crefname{example}{example}{examples}
\Crefname{example}{Example}{Examples}
\crefname{counterexample}{counterexample}{counterexamples}
\Crefname{counterexample}{Counterexample}{Counterexamples}
\crefname{assumption}{assumption}{assumptions}
\Crefname{assumption}{assumption}{Assumptions}
\crefname{thm_app}{theorem}{theorems}
\Crefname{thm_app}{Theorem}{Theorems}
\crefname{lem_app}{lemma}{lemmas}
\Crefname{lem_app}{Lemma}{Lemmas}
\crefname{prop_app}{proposition}{propositions}
\Crefname{prop_app}{Proposition}{Propositions}
\crefname{cor_app}{corollary}{corollaries}
\Crefname{cor_app}{Corollary}{Corollaries}
\crefname{definition_app}{definition}{definitions}
\Crefname{definition_app}{Definition}{Definitions}
\crefname{conj_app}{conjecture}{conjectures}
\Crefname{conj_app}{Conjecture}{Conjectures}
\crefname{notation_app}{notation}{notations}
\Crefname{notation_app}{Notation}{Notations}
\crefname{remark_app}{remark}{remarks}
\Crefname{remark_app}{Remark}{Remarks}
\crefname{example_app}{example}{examples}
\Crefname{example_app}{Example}{Examples}
\crefname{counterexample_app}{counterexample}{counterexamples}
\Crefname{counterexample_app}{Counterexample}{Counterexamples}
\crefname{assumption_app}{assumption}{assumptions}
\Crefname{assumption_app}{assumption}{assumptions}
\newcommand{\eps}{\varepsilon}
\newcommand{\RR}{\mathbb{R}}
\newcommand{\PP}{\mathbb{P}}
\newcommand{\EE}{\mathbb{E}}
\newcommand{\II}{\mathbbm{1}}
\newcommand{\Inner}[2]{\left\langle #1 , #2 \right\rangle}
\newcommand{\floor}[1]{\left\lfloor #1\right\rfloor}
\newcommand{\ceil} [1]{\left\lceil  #1\right\rceil}
\DeclareMathOperator{\dd}{\text{d}}
\begin{document}

\title{Uncertainty Quantification for nonparametric regression using Empirical Bayesian neural networks}

\author[1]{Stefan Franssen}
\author[2]{Botond Szab\'o}
\affil[1]{Delft Institute of Applied Mathematics}
\affil[2]{Department of Decision Sciences, Bocconi University\\
Bocconi Institute for Data Science and Analytics (BIDSA)
}



\maketitle

\begin{abstract}
We propose a new, two-step empirical Bayes-type of approach for neural networks. We show in context of the nonparametric regression model that the procedure (up to a logarithmic factor) provides optimal recovery of the underlying functional parameter of interest and provides Bayesian credible sets with frequentist coverage guarantees. The approach requires fitting the neural network only once, hence it is substantially faster than Bootstrapping type approaches. We demonstrate the applicability of our method over synthetic data, observing good estimation properties and reliable uncertainty quantification.
\end{abstract}

\section{Introduction}

Deep learning has received a lot of attention over the recent years due to its excellent performance in various applications, including personalized medicine~\cite{cirilloBigDataAnalytics2019}, self driving cars~\cite{s.ramosDetectingUnexpectedObstacles2017,raoDeepLearningSelfdriving2018}, financial institutions~\cite{huangDeepLearningFinance2020} and estimating power usage in the electrical grid~\cite{liangDeepLearningBasedPower2019,kimElectricEnergyConsumption2019}, just to mention a few. By now it is considered the state-of-the-art technique for image classification~\cite{krizhevskyImagenetClassificationDeep2012} or speech recognition~\cite{hintonDeepNeuralNetworks2012}.

Despite the huge popularity of deep learning, its theoretical underpinning is still limited, see for instance the monograph \cite{anthonyNeuralNetworkLearning1999} for an overview. In our work we focus on the mathematical statistical aspects of how well feed-forward, multilayer artificial neural networks can recover the underlying signal in the noisy data. When fitting a neural network an activation function has to be selected. The most commonly used activation functions include the sigmoid, hyperbolic tangent, rectified linear unit (ReLU) and their variants. Due to computational advantages and available theoretical guarantees we consider the ReLU activation function in our work. The approximation properties of neural network with ReLU activation function has been investigated by several authors recently. In \cite{mhaskarWhenWhyAre2017, poggioWhyWhenCan2017} it was shown that deep networks with a smoothed version of ReLU can reduce sample complexity and the number of training parameters compared to shallow networks to reach the same approximation accuracy. In the discussion paper \cite{schmidt-hieberNonparametricRegressionUsing2017} oracle risk bounds were derived for sparse neural networks in context of the  multivariate nonparametric regression model. This in turn implies for H\"older regular classes (up to a logarithmic factor) rate optimal concentration rates and under additional structural assumptions (e.g. generalized additive models, sparse tensor decomposition) faster rates preventing the curse of dimensionality. The results of  \cite{schmidt-hieberNonparametricRegressionUsing2017} were extended in different aspects by several authors.
In~\cite{suzukiAdaptivityDeepReLU2018} the more general Besov regularity classes were considered and adaptive estimation rates to the smoothness classes were derived. In~\cite{polsonPosteriorConcentrationSparse2018} Bayesian sparse neural networks were proposed, where sparsity was induced by a spike-and-slab prior, and rate adaptive posterior contraction rates were derived. Finally, in~\cite{michaelkohlerRateConvergenceFully2021} it was shown that the sparsity assumption on the neural network is not essential for the theoretical guarantees and similar results to \cite{schmidt-hieberNonparametricRegressionUsing2017} were derived for dense deep neural networks as well.

Most of the theoretical results focus on the recovery of the underlying signal of interest. However, it is at least as important to quantify how much we can rely on the procedure by providing reliable uncertainty statements. In statistics confidence regions are used to quantify the accuracy and remaining uncertainty of the method in a noisy model. Several approaches have already been proposed for statistical uncertainty quantification for neural networks, including bootstrap methods~\cite{osbandDeepExplorationBootstrapped2016} or ensemble methods~\cite{lakshminarayananSimpleScalablePredictive2017}. These methods are typically computationally very demanding especially for large neural networks. Bayesian methods are becoming also increasingly popular, since beside providing a natural way for incorporating expert information into the model via the prior they also provide built-in uncertainty quantification. The Bayesian counterpart of confidence regions are called credible regions which are the sets accumulating a prescribed, large fraction of the posterior mass. For neural networks various fully Bayesian methods were proposed, see for example~\cite{polsonPosteriorConcentrationSparse2018,wangUncertaintyQuantificationSparse2020}, however
they quickly become computationally infeasible as the model size increases. To speed up the computations variational alternatives were proposed, see for instance~\cite{baiEfficientVariationalInference2020}. An extended overview of machine learning methods for uncertainty quantification can be found in the survey \cite{gawlikowskiSurveyUncertaintyDeep2021}.

Bayesian credible sets substantially depend on the choice of the prior and it is not guaranteed that they have confidence guarantees in the classical, frequentist sense. In fact it is known that credible sets do not always give valid uncertainty quantification in context of high-dimensional and nonparametric models, see for instance \cite{dennisd.coxAnalysisBayesianInference1993,davidfreedmanWaldLectureBernsteinvon1999} and hence their use for universally acceptable uncertainty quantification is not supported in general. In recent years frequentist coverage properties of Bayesian credible sets were investigated in a range of high-dimensional and nonparametric models and theoretical guarantees were derived on their reliability under (from various aspects) mild assumptions, see for instance~\cite{szaboFrequentistCoverageAdaptive2015,castilloBernsteinvonMisesPhenomenon2014,yooSupremumNormPosterior2016,rayAdaptiveBernsteinMises2017, belitserCoverageLocalRadial2017, rousseauAsymptoticFrequentistCoverage2020,monardStatisticalGuaranteesBayesian2021} and references therein. However, we have only very limited understanding of the reliability of Bayesian uncertainty quantification in context of deep neural networks. To the best of our knowledge only (semi-)parametric aspects of the problem were studied so far~\cite{wangUncertaintyQuantificationSparse2020}, but these results do not provide uncertainty quantification on the whole functional parameter of interest. 

In our work we propose a novel, empirical Bayesian approach with (relatively) fast computational time and derive theoretical, confidence guarantees for the resulting uncertainty statements. As a first step, we split the data into two parts and use the first part to train a deep neural network. We then use this empirical (i.e. data dependent) network to define the prior distribution used in our Bayesian procedure. We cut of the last layer of this neural network and take the linear combinations of the output of the previous layer with weights endowed by prior distributions, see the schematic representation of the prior in  \Cref{sec:EBDNN} below. The second part of the data is used to compute the corresponding posterior distribution, which will be used for inference. We study the performance of this method in the nonparametric random design regression model, but in principle our approach is applicable more widely. We derive optimal, minimax $L_2$-convergence rates for recovering the underlying functional parameter of interest and frequentist coverage guarantees for the slightly inflated credible sets. We also demonstrate the practical applicability of our method in a simulation study and verify empirically the asymptotic theoretical guarantees.

The rest of the paper is organized as follows. We present our main results in Section \ref{sec:main}. After formally introducing the regression model we describe our Empirical Bayes Deep Neural Network (EBDNN) procedure in Section \ref{sec:EBDNN}, list the set of assumptions under which our theoretical results hold in Section \ref{sec: assumptions} and provide the guarantees for the uncertainty quantification in Section \ref{sec:thm:UQ}. In Section \ref{sec:sim} we present a numerical analysis underlining our theoretical findings and providing a fast and easily implementable algorithm. The proofs are deferred to the Appendix. The proofs for the optimal posterior contraction rates and the frequentist coverage of the credible sets are given in Section \ref{sec:proofs:main}. The approximation of the last layer of the neural network with B-splines is discussed in Section \ref{sec:approx:splines} and some relevant properties of B-splines are collected and verified in Section \ref{sec:B_splines}. Finally, general contraction and coverage results, on which we base the proofs in Section \ref{sec:proofs:main}, are given in Section \ref{sec:general:results}.


\section{Main results}\label{sec:main}
We consider in our analysis the multivariate random design regression model, where we observe pairs of random variables $(X_1,Y_1)$,..., $(X_n,Y_n)$ satisfying
\begin{align*}
Y_i=f_0(X_i)+Z_i,\qquad Z_i\stackrel{iid}{\sim}N(0,\sigma^2),\, X_i\stackrel{iid}{\sim}U([0,1]^d), \quad i=1,...,n,
\end{align*}
for some unknown function $f_0\in L^2([0,1]^d)$. We assume that the underlying function $f_0$ belongs to a $\beta$-smooth Sobolev ball $f_0\in S^{\beta}_d(M)$ with known model hyper-parameters $\beta, M,\sigma^2>0$. It is well known that the corresponding minimax $L_2$-estimation rate of $f_0$ is of order $\eps_n=n^{-\beta/(d+2\beta)}$.

We will investigate the behaviour of multilayer neural networks in context of this nonparametric regression model. We propose an empirical Bayes type of approach, which recovers the underlying functional parameter with the (up to a logarithmic factor) minimax rate and provides reliable uncertainty quantification for the procedure.

\subsection{Empirical Bayes Deep Neural Network (EBDNN)}\label{sec:EBDNN}
We start by formally describing deep neural networks and then present our two-step Empirical Bayes approach. A deep neural network of depth $L > 0$ and width $p = (p_0, \dots, p_L)$ is a collection of weights $W = \{W^i | W^i \in \mathbb{R}^{p_{i} \times p_{i-1}} , i = 1,\dots, L\}$, shifts (or biases) $b = \{ b^i | b^i \in \mathbb{R}^{p_i}, i = 1,\dots L-1 \}$ and an activation function $\sigma$. 
  There is a natural correspondence between deep neural networks with this architecture and functions $f_{W, b}(x):\, \mathbb{R}^{p_0} \rightarrow \mathbb{R}^{p_L}$, with recursive formulation $f_{W, b}(x) = W^L H^{L-1}(x)$, where $H^0_j(x) = x_j$ and $H^i_j(x) = \sigma\left( (W^i H^{i-1}(x))_j + b^i_j\right)$, for $j = 1,\dots, p_i$, $i=1,...,L$. Note that the activation function $\sigma$ is not applied in the final iteration. Different types of activation functions are considered in the literature, including sigmoid, hyperbolic tangent, ReLU, ReLU square. In this work we focus on ReLU activation functions, i.e. we take $\sigma(x)=\max(x,0)$.

Neural networks are very-high dimensional objects, with total number of parameters given by $\sum_{i=1}^{L}(p_{i-1}+1)p_i$. Therefore, from a statistical perspective it is natural to introduce some additional structure in the form of sparsity by setting most of the model parameters $W^i_{jk}$ $i=1,...,L$, $j=1,...,p_{i}$, $k=1,...,p_{i-1}$ and $b^i_{j}$, $i=1,...,L$, $j=1,...,p_i$ to zero. Such networks are called sparse, see the formal definition below.

\begin{definition}\label{Def:SparseDNN}
  We call a deep neural network $s$-sparse if the weights $W^i_{jk}$ and the biases $b^i_j$ take values in $[-1,1]$, and at most $s$ of them are nonzero.
\end{definition}

Neural networks without sparsity assumptions are called dense networks and are more commonly used in practice. In our analysis we focus mainly on sparse networks but our method is flexible and can be easily extended to dense networks as well, which direction we briefly discuss in a subsequent section. Furthermore, we introduce boundedness on the neural network mainly for analytical, but also for practical reasons. We assume that $\|f_{W,b} \|_\infty < F$ for a fixed constant $F>0$.

Next we note that in the last iteration of the recursive formulation  $f_{W, b}(x) = W^L H^{L-1}(x)$ we take the linear combination of the functions $H^{L-1}_j(x)$, $j=1,...,p_{L-1}$. These functions take the role of data generated basis functions of the neural network and will play a crucial role in our method.
\begin{definition}\label{def:DNNBasisFunctions}
  We call the collection of functions $\hat\phi_j = H^{L-1}_j$, $j=1,...,p_{L-1}$ the DNN basis functions generated by the neural network.
\end{definition}

We propose a two stage, Empirical Bayes type of procedure. We start by splitting the dataset $\mathbb{D}_n=\big((X_1,Y_1),... (X_n,Y_n)\big)$ into two (not necessarily equal) partition $\mathbb{D}_{n,1}$ and  $\mathbb{D}_{n,2}$.  We use the first dataset $\mathbb{D}_{n,1}$ to train the deep neural network. Then we build a prior distribution on the so constructed neural network and use the second dataset $\mathbb{D}_{n,2}$ to derive the corresponding posterior. More concretely, we cut-off the last layer of the neural network and take the (data driven) DNN basis functions $\hat\phi_j(x) = H^{L-1}_j(x)$, $j=1,...,p_{L-1}$ defined by the nodes in the $(L-1)$th layer. For convenience we use the notation $k=p_{L-1}$ for the number of DNN basis functions. We construct our prior distribution on the regression function by taking the linear combination of the so constructed basis functions and endowing the corresponding coefficients with prior distributions, i.e.
\begin{align}
\hat{\Pi}_k(\cdot)=\sum_{j=1}^{k}w_j \hat\phi_j(\cdot),\qquad w_j\stackrel{iid}{\sim} g,~j=1,...,k,\label{def:EBDNN:prior}
\end{align}
for some distribution $g$. Then the corresponding posterior is derived as the conditional distribution of the functional parameter given the second part of the data set $\mathbb{D}_{n,2}$. Please find below the schematic representation of our Empirical Bayes DNN prior and the corresponding posterior.\\

\begin{tikzpicture}\label{fig:EmpBayesNN}
    \tikzstyle{annot} = [text width=4em, text centered]
    
    \node[annot] (Datanode) at (0,0) {Data $\mathbb{D}_n$};
    \node[annot] (1sthalfdatanode) at (   \layersep,  2) {First part of $\mathbb{D}_n$: $\mathbb{D}_{n,1}$};
    \node[annot] (2ndhalfdatanode) at (   \layersep, -2) {Second part of $\mathbb{D}_n$: $\mathbb{D}_{n,2}$};
    \node[annot] (DNNnode)         at (3* \layersep,  2) {DNN};
    \node[annot] (priornode)       at (3* \layersep,  0) {Prior : $w_j \stackrel{iid}{\sim} g$, $j=1,...,k$};
    \node[annot] (Basisnode)       at (5* \layersep,  2) {$ \{ \hat{\phi}_j \}_{j = 1}^{k}$};
    \node[annot] (Priorfuncnode)   at (5* \layersep,  0) {$ \hat{\Pi}_k(\cdot)=\sum_{j = 1}^{k} w_j \hat{\phi}_j(\cdot)$};
    \node[annot] (Posteriornode)   at (5* \layersep, -2) {Posterior $\hat{\Pi}_k\left(\cdot\middle| \mathbb{D}_{n,2} \right)$};
    \path[->] (Datanode)        edge (1sthalfdatanode);
    \path[->] (Datanode)        edge (2ndhalfdatanode);
    \path[->] (1sthalfdatanode) edge(DNNnode);
    \path[->] (DNNnode)         edge (Basisnode);
    \path[->] (priornode)       edge (Priorfuncnode);
    \path[->] (Basisnode)       edge (Priorfuncnode);
    \path[->] (2ndhalfdatanode) edge (Posteriornode);
    \path[->] (Priorfuncnode)   edge (Posteriornode);
\end{tikzpicture}

\noindent We note, that often a pre-trained deep neural network is available corresponding to the regression problem of interest. In this case one can simply use that in stage one and compute the posterior based on the whole data set $\mathbb{D}_n$.

\subsection{Assumptions on the EBDNN prior}\label{sec: assumptions}

We start by discussing the deep neural network produced in step one using the first dataset $\mathbb{D}_{n,1}$. As mentioned earlier we consider sparse neural networks following \cite{schmidt-hieberNonparametricRegressionUsing2017}, but our results can be naturally extended to dense network as well. In \cite{schmidt-hieberNonparametricRegressionUsing2017,suzukiAdaptivityDeepReLU2018} optimal minimax concentration rates were derived for sparse neural networks under the assumptions that the networks are $s = k \log(n)$ sparse
and have width $p = \left(d, 6 k, \dots, 6k, k, 1 \right)$, with $k=k_n=n^{d/(d+2\beta)}$. We also apply these assumptions in our approach. However, since uncertainty quantification is a more complex task than estimation we need to introduce some additional structural requirements to our neural network framework. 

One of the big advantage of deep neural networks is that they can learn the best fitting basis functions
to the underlying structure of the  functional parameter $f_0$ of interest, often resulting sharper recovery rates than using standard, fixed bases, see for instance \cite{schmidt-hieberNonparametricRegressionUsing2017,suzukiAdaptivityDeepReLU2018}. However, neural networks in general are highly flexible due to the high-dimensional structure and do not provide a unique, for our goals appropriate representation. For instance, let us consider a neural network with ReLU activation function, see Figure \ref{fig:neuralnetwork} for schematic representation. Then let us include an additional layer in the network before the output layer consisting only one node, see Figure \ref{fig:neuralnetwork:extra}. This node takes the place of the output layer in the original network and since there is only one node in the so constructed last layer the output of the new network is the same as the original one (given that the output function is non-negative). Using the second neural network for our empirical Bayes prior is clearly sub-optimal as we end up with a one dimensional parametric prior for a nonparametric problem, resulting in overly confident uncertainty quantification. Therefore to avoid such pathological cases we introduce some additional structure to our neural network. We assume that the neural network produces nearly orthogonal basis functions, see the precise definition below.

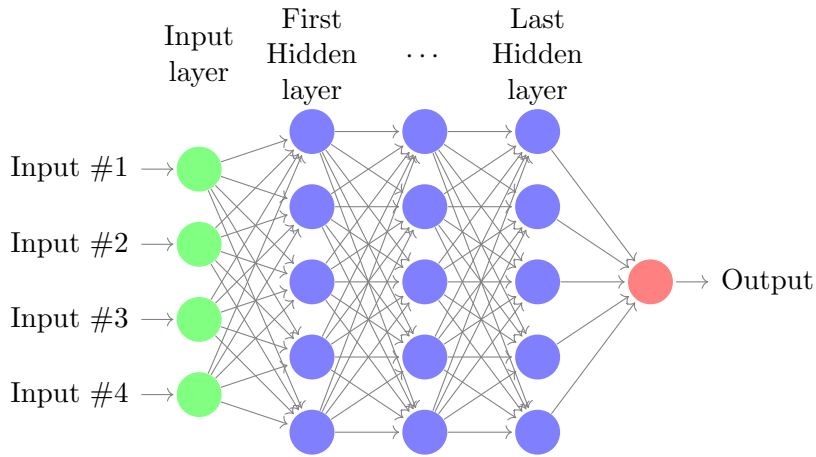
\begin{figure}
\begin{tikzpicture}[shorten >=1pt,->,draw=black!50, node distance=\layersep]
    \tikzstyle{every pin edge}=[<-,shorten <=1pt]
    \tikzstyle{neuron}=[circle,fill=black!25,minimum size=17pt,inner sep=0pt]
    \tikzstyle{input neuron}=[neuron, fill=green!50];
    \tikzstyle{output neuron}=[neuron, fill=red!50];
    \tikzstyle{first hidden neuron}=[neuron, fill=blue!50];
    \tikzstyle{middle hidden neuron}=[neuron, fill=blue!50];
    \tikzstyle{last hidden neuron}=[neuron, fill=blue!50];
    \tikzstyle{annot} = [text width=4em, text centered]

    \foreach \name / \y in {1,...,4}
        \node[input neuron, pin=left:Input \#\y] (I-\name) at (0,-\y) {};

    \foreach \name / \y in {1,...,5}
        \path[yshift=0.5cm]
            node[first hidden neuron] (HFirst-\name) at (\layersep,-\y cm) {};

    \foreach \name / \y in {1,...,5}
        \path[yshift=0.5cm]
            node[middle hidden neuron] (HMiddle-\name) at (2*\layersep,-\y cm) {};

    \foreach \name / \y in {1,...,5}
        \path[yshift=0.5cm]
            node[last hidden neuron] (HLast-\name) at (3*\layersep,-\y cm) {};

    \node[output neuron,pin={[pin edge={->}]right:Output}, right of=HLast-3] (O) {};

    \foreach \source in {1,...,4}
        \foreach \dest in {1,...,5}
            \path (I-\source) edge (HFirst-\dest);

    \foreach \source in {1,...,5}
        \foreach \dest in {1,...,5}
            \path (HFirst-\source) edge (HMiddle-\dest);

    \foreach \source in {1,...,5}
        \foreach \dest in {1,...,5}
            \path (HMiddle-\source) edge (HLast-\dest);
            
    \foreach \source in {1,...,5}
        \path (HLast-\source) edge (O);
        
    \node[annot,above of=HFirst-1, node distance=1cm] (hl) {First Hidden layer};
    \node[annot,above of=HMiddle-1, node distance=1cm] (h2) {$\cdots$};
    \node[annot,above of=HLast-1, node distance=1cm] (h3) {Last Hidden layer};
    \node[annot,left of=hl] {Input layer};
\end{tikzpicture}
\caption{Neural Network with $L-1$ hidden layer.}\label{fig:neuralnetwork}
\end{figure}

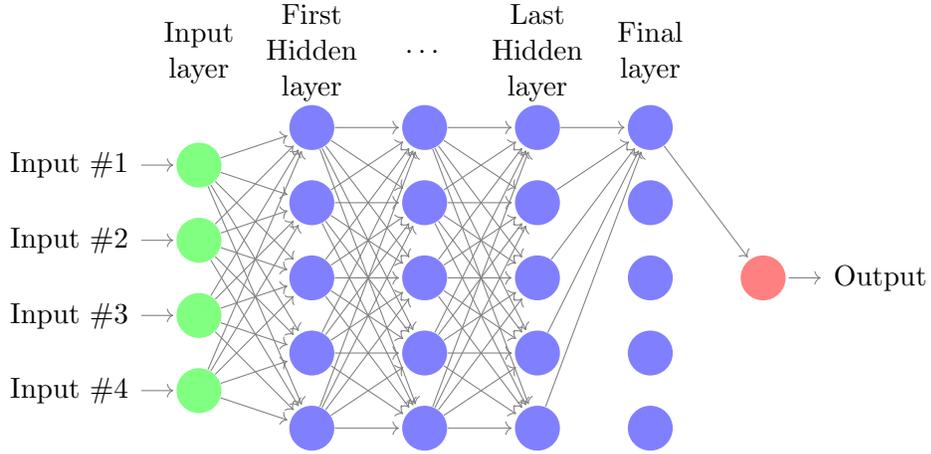
\begin{figure}
  \begin{tikzpicture}[shorten >=1pt,->,draw=black!50, node distance=\layersep]
    \tikzstyle{every pin edge}=[<-,shorten <=1pt]
    \tikzstyle{neuron}=[circle,fill=black!25,minimum size=17pt,inner sep=0pt]
    \tikzstyle{input neuron}=[neuron, fill=green!50];
    \tikzstyle{output neuron}=[neuron, fill=red!50];
    \tikzstyle{first hidden neuron}=[neuron, fill=blue!50];
    \tikzstyle{middle hidden neuron}=[neuron, fill=blue!50];
    \tikzstyle{last hidden neuron}=[neuron, fill=blue!50];
    \tikzstyle{final hidden neuron}=[neuron, fill=blue!50];
    \tikzstyle{annot} = [text width=4em, text centered]

    \foreach \name / \y in {1,...,4}
        \node[input neuron, pin=left:Input \#\y] (I-\name) at (0,-\y) {};

    \foreach \name / \y in {1,...,5}
        \path[yshift=0.5cm]
            node[first hidden neuron] (HFirst-\name) at (\layersep,-\y cm) {};

    \foreach \name / \y in {1,...,5}
        \path[yshift=0.5cm]
            node[middle hidden neuron] (HMiddle-\name) at (2*\layersep,-\y cm) {};

    \foreach \name / \y in {1,...,5}
        \path[yshift=0.5cm]
            node[last hidden neuron] (HLast-\name) at (3*\layersep,-\y cm) {};

    \foreach \name / \y in {1,...,5}
        \path[yshift=0.5cm]
            node[final hidden neuron] (Hfinal-\name) at (4*\layersep,-\y cm) {};

    \node[output neuron,pin={[pin edge={->}]right:Output}, right of=Hfinal-3] (O) {};

    \foreach \source in {1,...,4}
        \foreach \dest in {1,...,5}
            \path (I-\source) edge (HFirst-\dest);

    \foreach \source in {1,...,5}
        \foreach \dest in {1,...,5}
            \path (HFirst-\source) edge (HMiddle-\dest);

    \foreach \source in {1,...,5}
        \foreach \dest in {1,...,5}
            \path (HMiddle-\source) edge (HLast-\dest);
            
    \foreach \source in {1,...,5}
        \path (HLast-\source) edge (Hfinal-1);

    \path (Hfinal-1) edge (O);    
    \node[annot,above of=HFirst-1, node distance=1cm] (hl) {First Hidden layer};
    \node[annot,above of=HMiddle-1, node distance=1cm] (h2) {$\cdots$};
    \node[annot,above of=HLast-1, node distance=1cm] (h3) {Last Hidden layer};
    \node[annot,above of=Hfinal-1, node distance=1cm] (h4) {Final layer};
    \node[annot,left of=hl] {Input layer};
\end{tikzpicture}
\caption{The modified Neural Network of Figure \ref{fig:neuralnetwork} by adding an additional layer before the output layer. This deep neural network provides the same output (given that the output function is non-negative), but in the last hidden layer it has only one node hence provides only one basis function.}\label{fig:neuralnetwork:extra}

\end{figure}

\begin{definition} \label{def:DNNBasisorthonormal}
  We say a neural network produces near orthogonal basis if the Gram matrix $\Sigma_k$ given by $(\Sigma_k)_{i,j} = \langle \hat\phi_i, \hat\phi_j \rangle_2$ satisfies that $ c_1 \II_{k} \leq \Sigma_k \leq c_2 \II_{k}$ for some $0<c_1<c_2<\infty$ and for all $1\leq i,j\leq k$. 
\end{definition}

The requirement of near orthogonality is essential for our analysis to appropriately control the small ball probabilities of the prior distribution which is of key importance in Bayesian nonparametrics. Nevertheless, in view of the simulation study, given in Section \ref{sec:sim} it seems that this assumption can be relaxed. In the numerical analysis section we do not impose this requirement on the algorithm and still get in our examples accurate recovery and reliable uncertainty quantification. We summarize the above assumptions below.

\begin{assumption}\label{ass:dnn}
  Let us take $k=k_n = n^{d/(d+2 \beta)}$ and assume that the neural network $\hat{f}_n$ constructed in step 1 is
  \begin{itemize}
    \item bounded in supnorm $\| \hat{f}_n \|_\infty \leq F$,  
    \item $s = k \log(n)$ sparse, 
    \item has depth $L = \log(n) \ceil{\log_2(\max(4\beta, 4d))}$
    \item has width $p = \left(n, 6 k, \dots, 6k, k, 1 \right)$,
    \item there exists a $C > 0$ independent of $n$ such that the DNN basis functions $\hat\phi = (\hat\phi_1,...,\hat\phi_{k})$ satisfy $\| \theta^T \hat{\phi} \|_\infty \leq C \sqrt{k} \|\theta \|_\infty$,
    \item and the corresponding Gram matrix $\Sigma_k$, given by $(\Sigma_k)_{i,j} = \langle \hat\phi_i, \hat\phi_j \rangle_2$, is nearly orthogonal for some $0 < c_1  < c_2 < \infty$.
  \end{itemize}
\end{assumption}

We denote the class of deep neural networks satisfying \Cref{ass:dnn} by $\mathcal{F}(L, p, s, F, C, c_1, c_2)$. In Appendix \ref{sec:approx:splines} we show that such kind of DNN basis $\hat\phi_1,...,\hat\phi_{k}$ can be constructed. Next assume, similarly to~\cite{schmidt-hieberNonparametricRegressionUsing2017,suzukiAdaptivityDeepReLU2018}, that a near minimizer of the neural network can be obtained. This assumption is required to derive guarantees on the generalisation error of the network produced in step 1.

\begin{assumption}
  \label{ass:Near_minimizer}
  We assume that the network trained in step 1 is a near minimizer in expectation. Let $f_0 \in S_d^{\beta}(M) \cap L_\infty(M)$ and $\epsilon_n = n^{- \beta/(2 \beta + d)}$, then the estimator $\hat{f}_n$ resulting from the neural network satisfies that
  \[
    \EE_{f_0}\left[ \frac{1}{n} \sum_{i = 1}^n \left( y_i - \hat{f}_n(X_i) \right)^2 - \inf_{f \in \mathcal{F}(L, p, s, F, C, c_1, c_2)} \frac{1}{n} \sum_{i = 1}^n \left( Y_i - f(X_i) \right)^2 \right] \leq \epsilon_n \log(n)^3.
  \]
\end{assumption}


It remained to discuss the choice of the prior distribution $g$ on the coefficients of the DNN basis functions. In general we have a lot of flexibility in choosing $g$, but for analytical convenience we assume to work with continuous positive densities.
\begin{assumption}\label{ass:bounded_prior}
  Assume that the density $g$ in the prior\eqref{def:EBDNN:prior} is continuous and positive. 
\end{assumption}

\begin{remark}We note that our proof requires only that the density is bounded away from zero and infinity on a small neighborhood of (the projection of) the true function $f_0$, hence it is sufficient to require that the density $g$ is bounded away from zero and infinity on a large enough compact interval. Since one can construct a neural network with weights between -1 and 1, approximating the true function well enough, we can further relax our assumption and consider densities $g$ supported on on $[-1,1]$.
\end{remark}

\subsection{Uncertainty quantification with EBDNN}\label{sec:thm:UQ}
Our main goal is to provide reliable uncertainty quantification for the outcome of the neural network. Our two-step Empirical Bayes approach gives a probabilistic solution to the problem which in turn can be automatically used to quantify the remaining uncertainty of the procedure. First we show that the corresponding posterior distribution recovers the underlying functional parameter of interest $f_0$ with the minimax contraction rate $\epsilon_n =  n^{- \beta/(2 \beta + d)}$ up to a logarithmic factor.

\begin{thm}\label{thm:contraction:ebdnn}
  Let $\beta, M > 0$ and assume that the EBDNN prior $\hat\Pi_k$, given in \eqref{def:EBDNN:prior}, satisfies Assumptions \ref{ass:dnn}, \ref{ass:Near_minimizer} and \ref{ass:bounded_prior}. Then the corresponding posterior distribution contracts around the true function $f_0\in S^{\beta}_d(M)$ at the near minimax rate, i.e.
      \[
        \limsup_{n \rightarrow \infty}
        \sup_{f_0 \in S_d^{\beta}(M)\cap L_{\infty}(M)}
        \EE_{f_0}\left(
          \hat\Pi_k\left(
            f: \| f - f_0 \|_2 \geq M_n \log^3 (n) \epsilon_n
            \middle|
            \mathbb{D}_{n,2}
          \right)
        \right)
        = 0,
      \]
      for all $M_n \rightarrow \infty$.
\end{thm}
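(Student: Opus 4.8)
The plan is to condition on the first half of the data $\mathbb{D}_{n,1}$, so that the DNN basis $\hat\phi=(\hat\phi_1,\dots,\hat\phi_k)$ becomes a fixed (deterministic) system and the prior $\hat\Pi_k$ in \eqref{def:EBDNN:prior} reduces to an ordinary product prior $\theta\mapsto\prod_{j=1}^{k} g(\theta_j)$ on the coefficients of a $k$-dimensional linear model, with the posterior formed from the \emph{independent} second sample $\mathbb{D}_{n,2}$. On this conditional problem I would invoke the general Ghosal--Ghosh--van der Vaart-type contraction theorem collected in \Cref{sec:general:results}, verifying its three hypotheses at the target rate $\tilde\epsilon_n=\log^3(n)\epsilon_n$: a prior small-ball / Kullback--Leibler mass bound, a sieve with controlled metric entropy, and negligible prior mass outside the sieve. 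Since for the Gaussian random-design model the Kullback--Leibler divergence and variation between $P_f$ and $P_{f_0}$ are both of order $\|f-f_0\|_2^2$, each condition can be phrased directly in the population $L^2$-norm.

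The near-orthogonality of \Cref{def:DNNBasisorthonormal} (i.e. $c_1\II_k\le\Sigma_k\le c_2\II_k$ from \Cref{ass:dnn}) is what makes the finite-dimensional geometry tractable: it gives $\|\theta^\top\hat\phi\|_2^2=\theta^\top\Sigma_k\theta\asymp\|\theta\|_2^2$, so $L^2$-balls around a function correspond to Euclidean balls around its coefficient vector. For the prior-mass condition I need a $\theta_0$ with $\|\theta_0^\top\hat\phi-f_0\|_2\lesssim\tilde\epsilon_n$; I would take $\theta_0=\hat W^L$, the output weights of the trained network, so that $\theta_0^\top\hat\phi=\hat f_n$, which by \Cref{ass:Near_minimizer} combined with the DNN approximation theory of \Cref{sec:approx:splines} (and a uniform empirical-process bound converting empirical to population risk) lies within $\tilde\epsilon_n$ of $f_0$, with $\theta_0\in[-1,1]^k$ because the weights are bounded. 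Continuity and positivity of $g$ (\Cref{ass:bounded_prior}) bound its density below on a neighborhood of $\theta_0$, and since $k\asymp n\epsilon_n^2$, the estimate $\prod_j g$ over $\{\|\theta-\theta_0\|_2\le\epsilon_n\}\supseteq\{\|\theta-\theta_0\|_\infty\le\epsilon_n/\sqrt k\}$ yields mass at least $e^{-Cn\epsilon_n^2\log n}$, which is of the required order $e^{-n\tilde\epsilon_n^2}$ thanks to the extra logarithmic factors in $\tilde\epsilon_n$. For the sieve I take $\mathcal F_n=\{\theta^\top\hat\phi:\|\theta\|_\infty\le R_n\}$; near-orthogonality together with $\|\theta^\top\hat\phi\|_\infty\le C\sqrt k\|\theta\|_\infty$ gives $\log N(\epsilon_n,\mathcal F_n,\|\cdot\|_2)\lesssim k\log n\le n\tilde\epsilon_n^2$, while the leftover prior mass $\prod_j\PP(|w_j|>R_n)$ is negligible for polynomially growing $R_n$ (and is zero when $g$ is supported on $[-1,1]$, as permitted by the remark following \Cref{ass:bounded_prior}). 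The entropy bound also supplies the exponentially powerful tests required by the master theorem.

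Because \Cref{ass:Near_minimizer} controls the excess risk only in expectation over $\mathbb{D}_{n,1}$, the bound $\|\hat f_n-f_0\|_2\lesssim\tilde\epsilon_n$ holds only on an event $A_n$ of $\mathbb{D}_{n,1}$-probability tending to one (by Markov's inequality). I would therefore run the argument conditionally on $A_n$, where all three hypotheses hold and the general theorem gives $\EE_{f_0}\!\left[\hat\Pi_k(\|f-f_0\|_2\ge M_n\tilde\epsilon_n\mid\mathbb{D}_{n,2})\,\mathbf 1_{A_n}\right]\to0$, and bound the conditional posterior probability trivially by $1$ on $A_n^c$, whose probability vanishes; taking expectations over $\mathbb{D}_{n,1}$ and then the supremum over $f_0\in S_d^\beta(M)\cap L_\infty(M)$ (the constants being uniform in $f_0$) yields the claim. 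I expect the principal obstacle to be the second step: turning the in-expectation, empirical-risk near-minimizer guarantee into a high-probability \emph{population} $L^2$-approximation for $\hat f_n$, which requires controlling the random-design empirical process uniformly over the sparse DNN class and tracking logarithmic powers so the approximation error is genuinely $O(\tilde\epsilon_n)$; a secondary delicate point is ensuring the small-ball lower bound dominates, which is precisely where the near-orthogonality assumption is indispensable.
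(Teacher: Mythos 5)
Your proposal is correct in outline but follows a genuinely different route from the paper. You apply the well-specified Ghosal--Ghosh--van der Vaart machinery \emph{directly around $f_0$} at the inflated rate $\tilde\epsilon_n=\log^3(n)\epsilon_n$, verifying prior mass, entropy and sieve conditions with the small-ball centered at the trained output weights $\hat W^L$. The paper instead splits $\|f-f_0\|_2\le\|f-f^*\|_2+\|f^*-f_0\|_2$, where $f^*$ is the $L_2$-projection of $f_0$ onto the span of the DNN basis: the first term is handled by a \emph{misspecified} contraction lemma (Lemma \ref{thm:contraction}, proved via Theorem 8.36 of the Ghosal--van der Vaart book with a ratio-of-prior-mass condition centered at $\theta^*$), which gives the clean rate $\sqrt{k/n}=\epsilon_n$ with no logarithms; all log factors are pushed into the deterministic-given-$\mathbb{D}_{n,1}$ approximation term $\|f^*-f_0\|_2$, bounded on a high-probability event via Theorem 4 of Suzuki (2018) plus Markov. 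The paper's decomposition buys two things: it isolates where the $\log^3 n$ actually comes from (the network's generalization error, not the Bayesian step), and it reuses exactly the lemmas needed later for the coverage theorem; your route is more self-contained and standard, and works here because in Gaussian regression the Kullback--Leibler divergence equals $\|f-f_0\|_2^2/(2\sigma^2)$, so the misspecification is benign once some $f_{\theta_0}$ lies within $\tilde\epsilon_n$ of $f_0$.

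Two points in your plan are thinner than they should be. First, the "principal obstacle" you identify --- converting the in-expectation empirical-risk guarantee of \Cref{ass:Near_minimizer} into a high-probability population-$L^2$ bound --- is not actually open: it is precisely the content of Theorem 4 of \cite{suzukiAdaptivityDeepReLU2018}, which the paper cites and combines with Markov's inequality; you would do the same rather than reprove it. Second, "the entropy bound also supplies the exponentially powerful tests" is too quick for random-design regression: $L_2$-tests against alternatives that are not uniformly sup-norm bounded (your sieve has $\|\theta^\top\hat\phi\|_\infty\lesssim\sqrt{k}R_n\to\infty$) do not follow from entropy alone; one needs either sup-norm truncation of the sieve, the misspecified-testing lemmas (Lemmas 8.38 and 8.41 of Ghosal--van der Vaart) as in the paper's Lemma \ref{thm:contraction}, or a comparison of empirical and population $L_2$ norms via concentration of the empirical Gram matrix as in Lemma \ref{thm:cov:reg}. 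Neither issue is fatal, but both need to be addressed for the argument to close.
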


The proof of the theorem is given in Section \ref{sec:proof:contraction:ebdnn}. We note that one can easily construct estimators from the posterior inheriting the same concentration rate as the posterior contraction rate. For instance one can take the center of the smallest ball accumulating at least half of the posterior mass, see Theorem 2.5 of \cite{ghosalConvergenceRatesPosterior2000}. Furthermore, under not too restrictive conditions, it can be proved that the posterior mean achieves the same near optimal concentration rate as the whole posterior, see for instance page 507 of \cite{ghosalConvergenceRatesPosterior2000} or Theorem 2.3. of \cite{hanOraclePosteriorContraction2021}.

Our main focus is, however, on uncertainty quantification. The posterior is typically visualized and summarized by plotting the credible region $C_\alpha$ accumulating $1-\alpha$ fraction (typically one takes $\alpha=0.05$) of the posterior mass. In our analysis we consider $L_2$-balls centered around an estimator $\hat{f}$ (typically the posterior mean or maximum a posteriori estimator), i.e. 
\begin{align*}
C_{\alpha}=\{f:\, \|f-\hat{f}\|_2\leq r_\alpha\}\qquad\text{satisfying}\qquad \hat\Pi_k( f\in C_{\alpha} | \mathbb{D}_{n,2})=1-\alpha.
\end{align*}
More precisely, in case the posterior distribution is not continuous, then the radius $r_{\alpha}$ is taken to be the smallest such that $ \hat\Pi_k( f\in C_{\alpha} | \mathbb{D}_{n,2})\geq 1-\alpha$ holds. 

However, Bayesian credible sets are not automatically confidence sets. To use them from a frequentist perspective reliable uncertainty quantification we have to show that they have good frequentist coverage, i.e. 
$$\inf_{f_0\in S^{\beta}_d(M)} \PP_{f_0}(f_0\in C_{\alpha})\geq 1-\alpha.$$
In our analysis we introduce some additional flexibility by allowing the credible sets to be blown up by a factor $L_n$, i.e. we consider sets of the form
\begin{align} \label{def: credible}
C_{\alpha}(L_n)=\{f:\, \|f-\hat{f}\|_2\leq L_n r_\alpha\}\quad\text{with}\quad  \hat\Pi_k( f\in C_{\alpha}(1) | \mathbb{D}_{n,2})=1-\alpha.
\end{align}
This additional blow up factor $L_n$ is required as the available theoretical results in the literature on the concentration properties of the neural network are sharp only up to a logarithmic multiplicative term and we compensate for this lack of sharpness by introducing this additional flexibility. Furthermore, in view of our simulation study, it seems that a logarithmic blow up is indeed necessary to provide from a frequentist perspective reliable uncertainty statements, see Section \ref{sec:sim}.

The centering point of the credible sets can be chosen flexibly, depending on the problem of interest. In practice usually the posterior mean or mode is considered for computational and practical simplicity. Our results hold for general centering points under some mild conditions. We only require that the centering point attains nearly the optimal concentration rate. We formalize this requirement below.

Let us denote by $f^*=f_{\theta^*}=(\theta^*)^T\hat{\Phi}_k$, with $\hat\Phi_{k}=\big(\hat\phi_1,...,\hat\phi_{k}(x)\big)$ the DNN basis, the Kullback-Leibler (KL) projection of $f_0$ onto our model $\Theta_k=\{\sum_{j=1}^k \theta_j\hat\phi_j:\, \theta\in\RR^k \}$, i.e. let $\theta^*\in\RR^k$ denote the minimizer of the function $\theta\mapsto KL(f_0,\theta^T \hat\Phi_k)$. We note that the KL projection is equivalent with the $L_2$-projection of $f_0$ to $\Theta_k$ in the regression model with Gaussian noise. We assume that the centering point of the credible set is close to $f^*$. 

\begin{assumption}\label{ass:centering}
 The centering point $\hat \theta$ (i.e. $\hat{f}=f_{\hat{\theta}}=\hat{\theta}^T\hat\Phi_k $) satisfies that for all $\delta>0$ there exists $M_\delta>0$ such that
  \begin{equation} \label{freq}
    \sup_{f_0 \in  S_d^{\beta}(M)\cap L_{\infty}(M)} \PP_{f_0} \left( d_n( f^* , \hat{f})\leq  M_\delta n^{-\beta/(2\beta+d)} \right) \geq 1-\delta. 
  \end{equation}
\end{assumption}

This assumption on the centering point is mild. 
For instance considering the centering point of the smallest ball  accumulating a large fraction (e.g. half) of the posterior mass as the center of the credible ball satisfies this assumption. The posterior mean is another good candidate for appropriately chosen priors.

\begin{thm}\label{thm:coverage:ebdnn}
  Let $\beta, M > 0$ and assume that the EBDNN prior $\hat\Pi_k$, given in \eqref{def:EBDNN:prior}, satisfies Assumptions \ref{ass:dnn}, \ref{ass:Near_minimizer} and \ref{ass:bounded_prior}, and the centering point $\hat{f}$ satisfies Assumption \ref{ass:centering}. Then the EBDNN credible credible balls with inflating factor $L_{\delta,\alpha}\log^3(n)$ have uniform frequentist coverage and near optimal size, i.e. for arbitrary $\delta,\alpha>0$ there exists $L_{\delta, \alpha}>0$ such that 
\begin{align}
        &\liminf_n \inf_{f_0 \in S_d^{\beta}(M)\cap L_{\infty}(M)} \PP_{f_0} (f_0 \in C_{\alpha}(L_{\delta, \alpha}\log^3 n) ) \geq 1 - \delta, \label{eq:UniformCoverageEvenSplitDNN}\\
       &\liminf_n \inf_{f_0\in S_d^{\beta}(M)\cap L_{\infty}(M)} \PP_{f_0}(r_\alpha \leq C n^{-\beta/(2\beta+d)}) \geq 1- \delta,        \label{eq:UniformRadiusEvenSplitDNN}
\end{align}
      for some large enough $C > 0$.
\end{thm}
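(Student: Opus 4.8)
The plan is to condition on the first subsample $\mathbb{D}_{n,1}$; by the assumptions the basis $\hat\phi=(\hat\phi_1,\dots,\hat\phi_k)$ is then fixed with the structural properties of \Cref{ass:dnn}, and the randomness relevant for the posterior enters only through the independent sample $\mathbb{D}_{n,2}$. Writing $\hat f=\hat\theta^T\hat\phi$ for the centring point and $f^*=(\theta^*)^T\hat\phi$ for the $L_2$-projection of $f_0$ onto $\Theta_k$, I decompose by the triangle inequality
\begin{equation*}
\|f_0-\hat f\|_2 \le \|f_0-f^*\|_2 + \|f^*-\hat f\|_2 .
\end{equation*}
Coverage then reduces to three quantitative ingredients: a bias bound $\|f_0-f^*\|_2\lesssim \log^3(n)\epsilon_n$, a centring bound $\|f^*-\hat f\|_2\lesssim\epsilon_n$ holding with probability at least $1-\delta$, and a two-sided control $\underline c_\alpha\,\epsilon_n\le r_\alpha\le \bar c_\alpha\,\epsilon_n$ of the credible radius. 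Granted these, choosing $L_{\delta,\alpha}$ large enough makes $L_{\delta,\alpha}\log^3(n)\,r_\alpha$ dominate the right-hand side above, giving \eqref{eq:UniformCoverageEvenSplitDNN}, while the upper bound on $r_\alpha$ is exactly \eqref{eq:UniformRadiusEvenSplitDNN}.

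The first preparatory step compares the empirical and population Gram matrices. Let $\Phi$ be the design matrix on $\mathbb{D}_{n,2}$ (of size proportional to $n$) with entries $\Phi_{ij}=\hat\phi_j(X_i)$. Using the sup-norm control $\|\theta^T\hat\phi\|_\infty\le C\sqrt k\,\|\theta\|_\infty$ from \Cref{ass:dnn} together with a matrix Bernstein inequality, and the fact that $k=n^{d/(d+2\beta)}=o(n)$, I would show $\tfrac1n\Phi^T\Phi=\Sigma_k(1+o(1))$ in operator norm with probability tending to one. Combined with near orthogonality $c_1\II_k\le\Sigma_k\le c_2\II_k$, this yields two consequences. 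First, on $\Theta_k$ the empirical distance $d_n$ and the $L_2$-norm are equivalent, so \Cref{ass:centering} (stated for $d_n$) transfers to $\|f^*-\hat f\|_2\le M_\delta'\epsilon_n$ with probability at least $1-\delta$, settling the centring term. Second, near orthogonality is inherited by $\tfrac1n\Phi^T\Phi$, which is what makes the posterior-spread computation below tractable.

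The heart of the argument, and the main obstacle, is the two-sided bound on $r_\alpha$. The posterior for the coefficient vector has the explicit form $\pi(\theta\mid\mathbb{D}_{n,2})\propto \prod_{j=1}^k g(\theta_j)\,\exp\!\big(-\tfrac1{2\sigma^2}\sum_i (Y_i-\theta^T\hat\phi(X_i))^2\big)$. Because $g$ is continuous and strictly positive (\Cref{ass:bounded_prior}), it is locally flat on the $n^{-1/2}$ scale at which the Gaussian likelihood concentrates, so a Laplace / Bernstein--von Mises type expansion in the growing-dimension regime $k=o(n)$ shows that, conditionally on the data, $\theta$ concentrates around its posterior centre with covariance of order $\tfrac{\sigma^2}{n}\Sigma_k^{-1}$; this centre and $\hat f$ both lie within $O(\epsilon_n)$ of $f^*$, so $r_\alpha$ reflects the posterior spread rather than the bias. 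Since $\|f-\hat f\|_2^2=(\theta-\hat\theta)^T\Sigma_k(\theta-\hat\theta)$, the posterior law of $\|f-\hat f\|_2^2$ concentrates around a value of order $\tfrac{\sigma^2 k}{n}=\sigma^2\epsilon_n^2$, with fluctuations small enough to pin its $(1-\alpha)$-quantile between constant multiples of $\epsilon_n$. This delivers $\underline c_\alpha\epsilon_n\le r_\alpha\le\bar c_\alpha\epsilon_n$ with constants depending on $\alpha,\sigma,c_1,c_2$ but not on $f_0$. I expect the delicate point to be the quantile control in increasing dimension with a general prior, where near orthogonality is indispensable; in the write-up this is naturally abstracted into the general coverage theorem of \Cref{sec:general:results}, whose hypotheses the preceding steps verify.

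It remains to bound the bias. Since $\hat f_n\in\Theta_k$, the projection satisfies $\|f_0-f^*\|_2\le\|f_0-\hat f_n\|_2$, and the oracle/approximation analysis underlying \Cref{thm:contraction:ebdnn} bounds the population prediction error by $\|f_0-\hat f_n\|_2\lesssim\log^3(n)\epsilon_n$ with probability tending to one, uniformly over $f_0\in S_d^\beta(M)\cap L_\infty(M)$ (combining \Cref{ass:Near_minimizer} via Markov's inequality with a uniform empirical-process bound over $\mathcal{F}(L,p,s,F,C,c_1,c_2)$ and the Sobolev approximation guarantees). Collecting the three ingredients and taking a union bound over the finitely many high-probability events, we obtain on an event of probability at least $1-\delta-o(1)$ that $\|f_0-\hat f\|_2\le C'\log^3(n)\epsilon_n\le L_{\delta,\alpha}\log^3(n)\,r_\alpha$ for $L_{\delta,\alpha}:=C'/\underline c_\alpha$, which proves \eqref{eq:UniformCoverageEvenSplitDNN}; the estimate $r_\alpha\le\bar c_\alpha\epsilon_n$ is \eqref{eq:UniformRadiusEvenSplitDNN}. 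Uniformity over the Sobolev ball is preserved throughout because every bound above is uniform in $f_0$.
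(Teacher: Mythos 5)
Your overall architecture coincides with the paper's: decompose $\|f_0-\hat f\|_2$ into the approximation error $\|f_0-f^*\|_2\lesssim\log^3(n)\,\epsilon_n$ (via \Cref{ass:Near_minimizer} and Theorem 4 of \cite{suzukiAdaptivityDeepReLU2018}, using that $\hat f_n\in\Theta_k$ so the projection error is dominated by the network's error), the centring error $\|f^*-\hat f\|_2\lesssim\epsilon_n$ (via \Cref{ass:centering} and the equivalence of $d_n$ and $\|\cdot\|_2$ on $\Theta_k$ obtained from Gram-matrix concentration), and then beat the sum by $L_{\delta,\alpha}\log^3(n)\,r_\alpha$ using a lower bound of order $\epsilon_n=\sqrt{k/n}$ on the credible radius. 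The paper packages the last step slightly differently: it never isolates an event $\{r_\alpha\ge\underline c_\alpha\epsilon_n\}$, but instead notes that $\{\|f_0-\hat f\|_2\le L_n r_\alpha\}$ contains the event that the posterior ball of radius $\|f_0-\hat f\|_2/L_n$ around $\hat f$ carries mass below $1-\alpha$, and bounds the complementary probability by Markov's inequality applied to the expected small-ball mass from \Cref{thm:cov:reg}. That difference is cosmetic; the substance is the same anti-concentration bound, and your choice $L_{\delta,\alpha}=C'/\underline c_\alpha$ mirrors the paper's $L_{\eps,\alpha}\ge M_\eps/\delta_{\eps,\alpha}$.

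The one point of substantive divergence is the mechanism you propose for that anti-concentration: a Laplace/Bernstein--von Mises expansion giving posterior covariance $\sigma^2 n^{-1}\Sigma_k^{-1}$. In the regime $k=n^{d/(d+2\beta)}$ with a general (non-Gaussian, merely continuous and positive) product prior, a BvM in growing dimension is a genuinely harder statement than what is needed and typically requires extra conditions such as $k^3/n\to 0$, i.e.\ $\beta>d$; moreover it would deliver a two-sided quantile control that the coverage claim does not require --- only the lower bound on $r_\alpha$ enters \eqref{eq:UniformCoverageEvenSplitDNN}, while the upper bound \eqref{eq:UniformRadiusEvenSplitDNN} follows separately from the contraction result (\Cref{thm:contraction}) together with \Cref{ass:centering}. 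The paper avoids BvM entirely: \Cref{thm:general:coverage} derives the small-ball bound from a prior-mass-ratio estimate (condition \labelcref{Ass:Small_ball_control}) combined with likelihood-ratio control, which needs only \eqref{assump:prior} and near orthogonality. Since you explicitly fall back on the general coverage theorem of \Cref{sec:general:results} for the delicate step, your argument closes, but the BvM sketch should not be presented as the proof of that step --- taken literally it is the gap in your write-up.
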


We defer the proof of the theorem to Section \ref{sec:proof:coverage:ebdnn}.

\section{Numerical Analysis}\label{sec:sim}
So far we have studied the EBDNN methodology from a theoretical, asymptotic perspective. In this section we investigate the finite sample behaviour of the procedure. First note that the theoretical bounds in~\cite{schmidt-hieberNonparametricRegressionUsing2017,suzukiAdaptivityDeepReLU2018} are not known to be tight. Sharper bounds would result in more accurate procedure with smaller adjustments for the credible sets. For these reasons we study the performance of the EBDNN methodology in synthetic data sets, where the estimation and coverage properties can be empirically explicitly evaluated.

In our implementation we deviate for practical reasons in three points from the theoretical assumptions considered in the previous sections. First, in practice sparse deep neural networks are rarely used, as they are typically computationally too involved to train. Instead, dense deep neural networks are applied routinely which we will also adopt in our simulation study.
Moreover, the global optima typically can not be retrieved when training a neural network. The common practice is to use gradient descent and aim for attaining good local minimizer.
Finally, the softwares used to train deep neural networks do not necessarily return a near orthogonal deep neural network. Even worse, some of the produced basis functions can be collinear or even constantly zero. To guarantee that the produced basis functions are nearly orthogonal one can either apply the Gram-Schmidt procedure or introduce a penalty for collinearity. We do not pursue this direction in our numerical analysis, but use standard softwares and investigate the robustness of our procedure with respect to these aspects. So instead of studying our EBDNN methodology under our restrictive assumptions we investigate its performance in more realistic scenarios. We fit a dense deep neural network using standard gradient descent and do not induce sparsity or near orthogonality to our network.

\subsection{Implementation details}

We have implemented our EBDNN method in Python. 
We used Keras~\cite{cholletKeras2015} and tensorflow~\cite{abadiTensorFlowLargescaleMachine2015} to fit a deep neural network using the first half of the data.
We use gradient decent to fit a dense neural network with $L = \ceil{\log_2(\beta)\log_2(n)}$ layers. Each of the first $L-1$ hidden layers has width $6 k_n$, with $k_n = n^{d/(2\beta+d)}$, and we apply the ReLU activation function on them. The last layer has width $k_n$ and the identify map is taken as activation function on it, that is, we take the weighted linear combination of these basis functions.
Then we extract the basis functions by removing the last layer and endow the corresponding weights by independent and identically distributed standard normal random variables, to exploit conjugacy and speed up the computations. We derive credible regions by sampling from the posterior using Numpy~\cite{harrisArrayProgrammingNumPy2020} and empirically computing the quantiles and the posterior mean used as the centering point. The corresponding code is available at \cite{franssenUncertaintyQuantificationUsing}.

\subsection{Results of the numerical simulations}
We consider two different regression functions in our analysis.
\begin{equation*}
  f_1 = \sum_{i = 1}^\infty \frac{ \sin(i) \cos( \pi (i - 0.5) x)}{i^{1.5}}, \qquad f_2 = \sum_{i = 1}^\infty \frac{\sin(i^2)\cos( \pi (i - 0.5) x)}{i^{1.5}}.
\end{equation*}
Note that both of them belong to a Sobolev class with regularity 1. In the implementation we have considered a sufficiently large cut-off of their Fourier series expansion.

In our simulation study we investigate beyond the $L_2$-credible balls also the pointwise and $L_{\infty}$ credible regions as well. In our theoretical studies we have derived good frequentist coverage after inflating the credible balls by a $\log^3(n)$ factor. In our numerical analysis we observe that (at least on a range of examples) a $\log(n)$ blow-up factor is sufficient, while a $\sqrt{\log(n)}$ blow-up is not enough.

We considered sample sizes $n=$ 1000, 5000, 10000, and 50000, and repeated each of the experiments 1000 times. We report in Table \ref{table:L2dist} the average $L_2$-distance and the corresponding standard deviation between the posterior mean and the true functional parameter of interest. 
\begin{center}
\begin{tabular}{l c c c c}
$n$ & 1000 & 5000 & 10000 & 50000 \\
$f_1$ & $213.33 \pm 116.06$ & $120.36 \pm 29.29$ & $96.84 \pm 18.58$ & $48.94 \pm 9.19$\\
$f_2$ & $221.20 \pm 113.48$ & $140.04 \pm 46.69$ & $108.82 \pm 9.73$ & $75.74 \pm 3.60$\\
\end{tabular}
\captionof{table}{Average $L_2$-distance between the posterior mean and the true function based on 1000 repetitions. The sample sizes range from 1000 to 50000.}
\label{table:L2dist}
\end{center}

Furthermore, we investigate the frequentist coverage properties of the EBDNN credible sets by reporting the fraction of times the (inflated) credible balls contain the true function out of the 1000 runs in Table \ref{table:L2coverage}. One can observe that in case of function $f_2$ a $\sqrt{\log n}$ blow up factor is not sufficient and the more conservative $\log n$ inflation has to be applied, which provides reliable uncertainty quantification in both cases.

\begin{center}
\begin{tabular}{l c c c c c}
 function & blow-up & 1000 & 5000 & 10000 & 50000 \\
\multirow{3}{*}{$f_1$} & none & 0.0 & 0.0 & 0.0 & 0.0\\
 & $\sqrt{\log(n)}$ & 0.893 & 0.896 & 0.848 & 0.928\\
 & $\log(n)$ & 0.951 & 0.997 & 1.0 & 1.0\\
 & & & & & \\
\multirow{3}{*}{$f_2$} & none & 0.0 & 0.0 & 0.0 & 0.0\\
 & $\sqrt{\log(n)}$ & 0.834 & 0.693 & 0.736 & 0.003\\
 & $\log(n)$ & 0.934 & 0.986 & 1.0 & 1.0\\
\end{tabular}
\captionof{table}{Frequentist coverage of the inflated $L_2$-credible balls based on 1000 runs of the algorithm. Sample size is ranging between 1000 and 50000 and we considered multiplicative inflation factors between 1 and $\log n$.}
\label{table:L2coverage} 
\end{center}

We also report the size of the $L_2$ credible balls in Table \ref{table:L2width}. One can observe that the radius of the credible balls are substantially smaller than the average Euclidean distance between the posterior mean the true functions $f_1$ and $f_2$ of interest, respectively. This explains the necessity of the inflation factor applied to derive reliable uncertainty quantification from the Bayesian procedures. We illustrate the method in Figure \ref{fig:L2crediblebands}. Note that the true function is inside of the region defined by the convex hull of the 95\% closest posterior draws to the posterior mean.

\begin{center}
\begin{tabular}{l c c c c}
 $n$ & 1000 & 5000 & 10000 & 50000 \\
$f_1$ & $101.91 \pm 15.69$ & $52.27 \pm 3.60$ & $38.45 \pm 2.33$ & $19.47 \pm 1.03$\\
$f_2$ & $91.26 \pm 16.50$ & $49.61 \pm 3.76$ & $37.46 \pm 2.03$ & $19.69 \pm 0.85$\\
\end{tabular}
\captionof{table}{$L_2$ The average diameter and the corresponding standard deviation of the (non-inflated) credible balls based on $1000$ runs of the algorithm. }
\label{table:L2width}
\end{center}

\begin{figure}
  \centering
  \includegraphics[width=\linewidth]{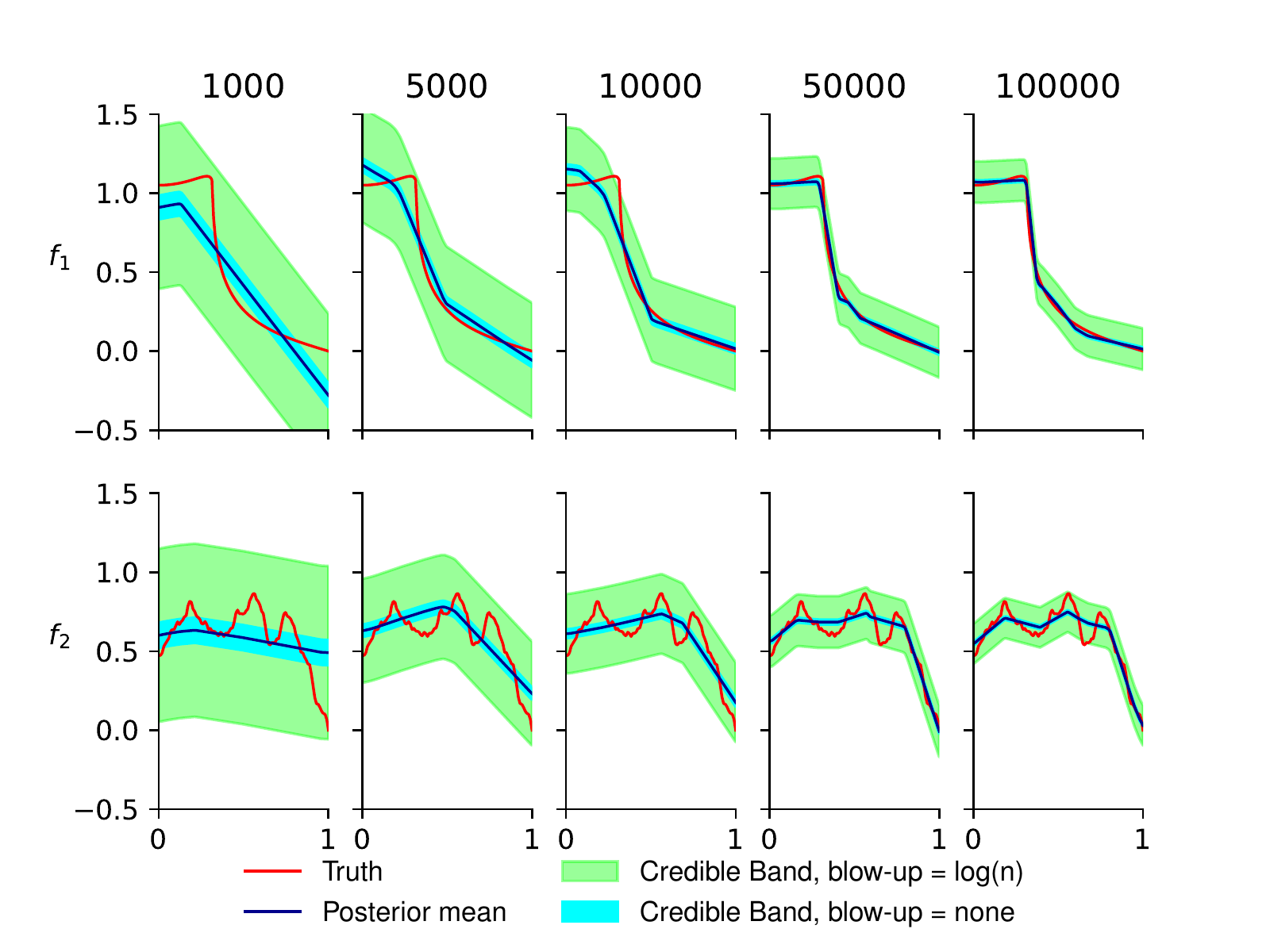}
  \caption{EBDNN $L_2$-credible balls illustrated by the region covered by the 95\% closest draw from the posterior to the posterior mean in $L_2$-distance. Sample size increases from 1000 to 100000. The original credible sets are plotted by light blue and the inflated credible sets by green.}
  \label{fig:L2crediblebands}
\end{figure}

Next we investigate the point wise and $L_{\infty}$ credible regions. Compared to the $L_2$-credible balls we note that the $L_\infty$ credible bands are roughly a factor $2$ wider, see Table \ref{table:Linfwidth}.

\begin{center}
\begin{tabular}{l c c c c}
 $n$ & 1000 & 5000 & 10000 & 50000 \\
$f_1$ & $175.93 \pm 28.33$ & $101.10 \pm 10.66$ & $77.08 \pm 6.50$ & $46.38 \pm 3.83$\\
$f_2$ & $145.60 \pm 30.59$ & $94.65 \pm 9.93$ & $78.41 \pm 5.91$ & $51.26 \pm 3.84$\\
\end{tabular}
\captionof{table}{The average supremum diameters and the corresponding standard deviation of the (non-inflated) credible balls based on $1000$ runs of the algorithm.}
\label{table:Linfwidth}
\end{center}
At the same time, the distance between the posterior mean and the true regression function is roughly a factor $6$ larger compared to the situation in the $L_2$ norm, see Table \ref{table:Linfdist}.

\begin{center}
\begin{tabular}{l c c c c}
$n$ & 1000 & 5000 & 10000 & 50000 \\
$f_1$ & $442.45 \pm 181.02$ & $355.40 \pm 55.67$ & $325.34 \pm 42.26$ & $199.95 \pm 31.32$\\
$f_2$ & $569.47 \pm 119.33$ & $360.34 \pm 81.54$ & $255.59 \pm 36.78$ & $169.60 \pm 11.41$\\
\end{tabular}
\captionof{table}{Average supremum norm-distances between the posterior mean and the true function based on 1000 repetitions. }
\label{table:Linfdist}
\end{center}

This results in worse coverage results than in the $L_2$ case, although inflating the credible bands by a $\log (n)$ factor still results in reliable uncertainty quantification on our simulated data, see Table \ref{table:Linfcoverage}  and Figure \ref{fig:Linftycrediblebands}.

\begin{figure}
  \centering
  \includegraphics[width=\linewidth]{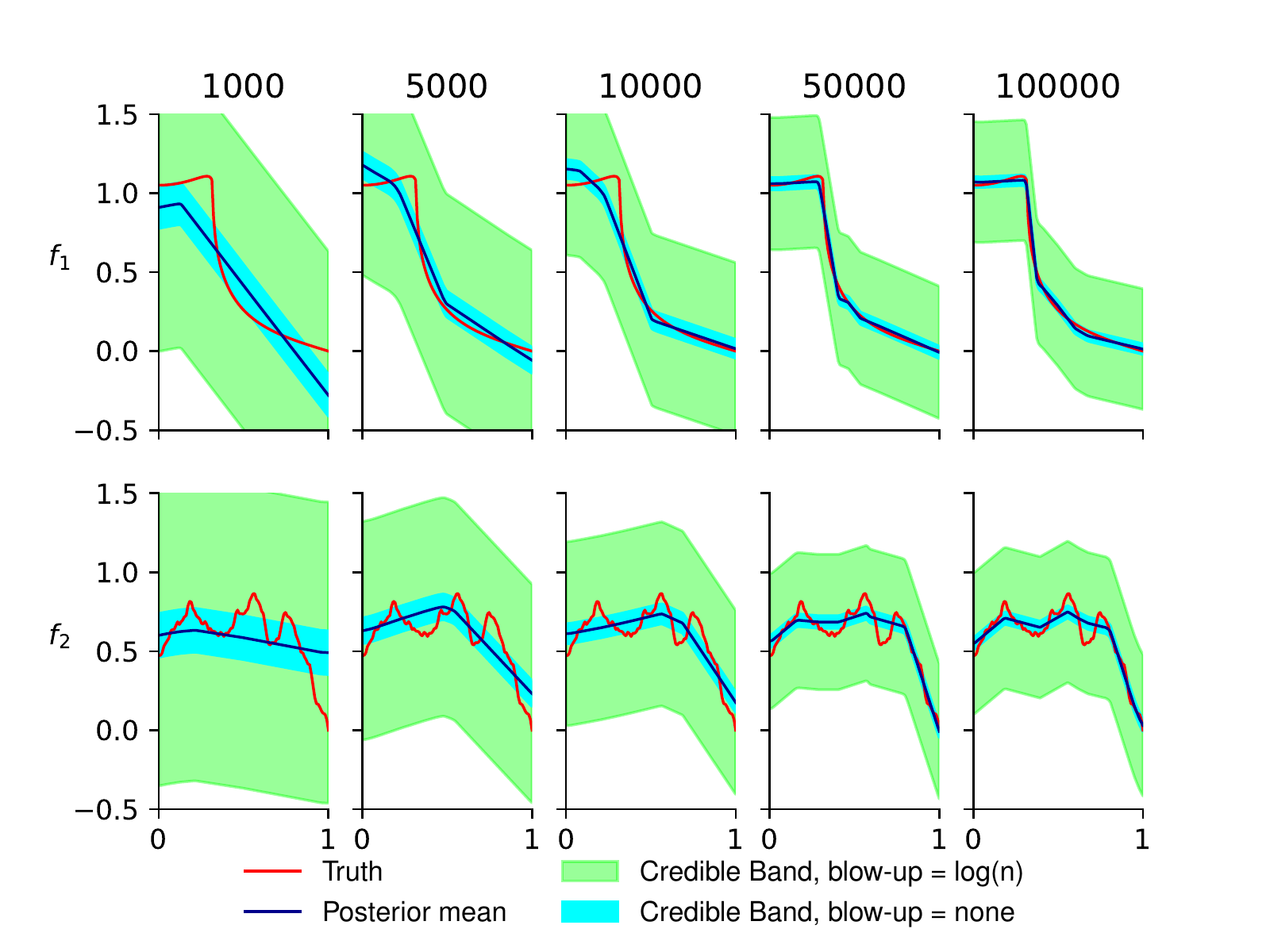}
\caption{EBDNN $L_\infty$-credible regions. The bands are formed by keeping the 95\% closest draws from the posterior to the posterior mean in $L_\infty$-distance. Sample size increases from 1000 to 100000. The original credible bands are plotted by light blue and the inflated credible bands by green.}
  \label{fig:Linftycrediblebands}
\end{figure}

\begin{center}
\begin{tabular}{l c c c c c}
 function & blow-up & 1000 & 5000 & 10000 & 50000 \\
\multirow{3}{*}{$f_1$} & none & 0.0 & 0.0 & 0.0 & 0.0\\
 & $\sqrt{\log(n)}$ & 0.779 & 0.12 & 0.015 & 0.081\\
 & $\log(n)$ & 0.957 & 0.997 & 1.0 & 1.0\\
 & & & & & \\
\multirow{3}{*}{$f_2$} & none & 0.0 & 0.0 & 0.0 & 0.0\\
 & $\sqrt{\log(n)}$ & 0.101 & 0.181 & 0.384 & 0.503\\
 & $\log(n)$ & 0.946 & 0.986 & 1.0 & 1.0\\
\end{tabular}
\captionof{table}{Frequentist coverage of the inflated $L_\infty$-credible balls based on 1000 runs of the algorithm. Sample size is ranging between 1000 and 50000 and we considered multiplicative inflation factors between 1 and $\log n$.}
\label{table:Linfcoverage} 
\end{center}

\section{Discussion}
We have introduced a new methodology, the empirical Bayesian deep neural networks (EBDDN). 
We studied the accuracy of estimation and uncertainty quantification of this method from a frequentist, asymptotic point of view. We have derived optimal contraction rates and frequentist coverage guarantees for the slightly inflated credible balls.

We have studied the practical performance of the EBDDN method on synthetic data sets under relaxed, practically more appealing assumptions compared to the ones required for the theoretical guarantees. The simulation study suggests that a smaller inflation factor can be applied than the one resulting from the theoretical study, but it can not be completely avoided as otherwise the credible sets would provide over-confident, misleading uncertainty statements.

Our approach has two main advantages over competing methodologies. 
First of all, EBDNN has theoretic guarantees, while other methodologies, to the best of our knowledge, do not have theoretical underpinning on the validity of the uncertainty quantification for the functional parameter of interest.
Secondly, EBDNN are easy to compute compared to other approaches, since they only require the training of one deep neural network. 

Moreover, it is easy to adapt this methodology to other frameworks by using a different activation function in the last layer. 
This would for example allow to cover classification by using a softmax activation function in the final layer. A follow up simulation study, based on our results, was executed in the logistic regression model in the master thesis~\cite{casaraSimulationStudyUncertainty2021}.

\section{Acknowledgements}
Initial discussions surrounding this project started in Lunteren 2019, together with Amine Hadji and Johannes Schmidt-Hieber. 
We would like to thank both of them for the fruitful discussions. 
This project has received funding from the European Research Council (ERC) under the European Union’s Horizon 2020 research and innovation programme (grant agreement No 101041064). 
Moreover, the research leading to these results is partly financed by a Spinoza prize awarded by the Netherlands Organisation for Scientific Research (NWO).

\printbibliography


\appendix
\section{Proof of the main results}\label{sec:proofs:main}
Before providing the proof of our main theorems we recall a few notations used throughout the section. We denote by $\mathbb{D}_{n,1}$ and $\mathbb{D}_{n,2}$ the first and second half of the data respectively, i.e.
  \begin{align*}
      \mathbb{D}_{n,1} &= ((X_1, Y_1), \dots, (X_{\floor{\frac{n}{2}}}, Y_{\floor{\frac{n}{2}}})),\\
      \mathbb{D}_{n,2} &= ((X_{\floor{\frac{n}{2}}+1}, Y_{\floor{\frac{n}{2}}+1}), \dots, (X_n, Y_n)).
  \end{align*}
Furthermore, we denote by $f^*$ the $L_2$-projection of $f_0$ into the linear space spanned by the DNN basis based on the first data set $\mathbb{D}_{n,1}$, as it was defined above Assumption \ref{ass:centering}. Next we give the proofs for our main theorems.

\subsection{Proof of Theorem \ref{thm:contraction:ebdnn}}\label{sec:proof:contraction:ebdnn}

First note that by triangle inequality $\| f - f_0 \|_2 \leq \| f - f^* \|_2 + \| f^* - f_0 \|_2$. We deal with the two terms on the right hand side separately.

In view of Lemma \ref{thm:contraction} (with $\mathbb{D}_n:=\mathbb{D}_{n,2}$, $k=n^{d/(2\beta+d)}$,  $\Sigma_k$ defined by the DNN basis functions $\hat\phi_1$,..., $\hat\phi_k$, $\Pi_k=\hat\Pi_k$ and with respect to the conditional distribution given the first data set $\mathbb{P}_{f_0}^{|\mathbb{D}_{n,1}}$)
we get that for every $\delta>0$ there exists $M_\delta<\infty$,
\begin{align}
    \sup_{f_0 \in S_d^{\beta}(M)}
    \EE_{f_0}^{|\mathbb{D}_{n,1}}
    \hat{\Pi}_k\left(f:\,
      \| f - f^* \|_2 \geq M_{\delta} n^{-\beta/(2\beta+d)} |    \mathbb{D}_{n,2}
    \right)\leq\delta,\label{eq:help:contr_dnn}
\end{align}
with $\PP_{f_0}$-probability tending to one. Hence, it remained to deal with the term $\| f^* - f_0 \|_2$. We introduce the event 
$$A_n = \{ \| f^* - f_0 \|_2 \leq  (M_n/2) n^{-\beta/(2\beta+d)} \log^3(n) \},$$
 which is independent from the second half of the data $\mathbb{D}_{n,2}$, hence

  \begingroup
  \allowdisplaybreaks
  \begin{align*}
    &
    \EE_{f_0}
    \hat\Pi_k\left( \theta:\, \|f_{\theta} - f_0 \|_2 \geq M_n n^{-\frac{\beta}{2\beta+d}} \log^3(n) \middle| \mathbb{D}_{n,2} \right) \\
    &\qquad\leq 
    \EE_{f_0}\Big( \II_{A_n}\EE_{f_0}^{|\mathbb{D}_{n,1}}
    \hat\Pi_k( \theta:\, \|f_\theta - f^*\|_2 \geq M_n n^{-\frac{\beta}{2\beta+d}}| \mathbb{D}_{n,2})\Big)+
    \EE_{f_0} \II_{A_n^c}.
\end{align*}
\endgroup
The first term is bounded by $\delta$ in view of assertion \eqref{eq:help:contr_dnn}, while the second term tends to zero in view of Theorem 4 of \cite{suzukiAdaptivityDeepReLU2018} combined with Markov inequality.

\subsection{Proof of Theorem \ref{thm:coverage:ebdnn}}\label{sec:proof:coverage:ebdnn}

Let $L_n = L_{\epsilon, \alpha} \log(n)^3$, $\eps_n=n^{-\beta/(2\beta+d)}$ and $k=n^{d/(2\beta+d)}$. Then by triangle inequality we get
  \begin{align}
    \PP_{f_0} \left( f_0 \in C_{\alpha}(L_n) \right) &= \PP_{f_0} \left( \| f_0 - \hat{f}\|_2 \leq L_n r_\alpha \right)\nonumber \\
    & \geq \PP_{f_0}\Big( \hat\Pi_k(\theta:\, \| f_\theta -\hat{f} \|_2 \leq \| \hat{f} - f_0 \|_2/ L_n | \mathbb{D}_{n,2}) < 1 - \alpha \Big).\label{eq: help1}
    \end{align}
    Furthermore, let us introduce the event $A_n = \{\| \hat{f} - f_0\|_2 \leq  M_{\eps}\log^3 (n)\epsilon_n \}$. Note that by triangle inequality and in view of Assumption \ref{ass:centering} (for large enough choice of $M_{\eps}$) and Theorem 4 of~\cite{suzukiAdaptivityDeepReLU2018} (with $f^*$ denoting the $L_2$-projection of $f_0$ to the linear space spanned by the DNN basis) combined with Markov's inequality,
\begin{align*}
\PP_{f_0}(A_n^c)&\leq \PP_{f_0} \big( \| \hat{f} - f^*\|_2 \geq  M_{\eps}\epsilon_n/2 \big)\\
 &\qquad + \PP_{f_0} \big( \| f_0 - f^*\|_2 \geq  M_{\eps}\log^3 (n)\epsilon_n/2 \big)\\
&\leq \eps/3+\eps/3=(2/3)\eps.
\end{align*}
Hence, the probability on the right hand side of \eqref{eq: help1} is lower bounded by
  \[
    \PP_{f_0}\Big(\hat\Pi_k(\theta:\, \| f_\theta - \hat{f} \|_2 \leq \frac{M_{\eps} \epsilon_n}{L_{\epsilon,\alpha}}  | \mathbb{D}_{n,2} ) < 1 - \alpha\Big)-(2/3)\eps.
  \]

We finish the proof by showing  that the first term in the preceding display is bounded from below by $1-\eps/3$. Since by assumption the DNN basis is nearly orthogonal with $\PP_{f_0}$-probability tending to one, we get in view of Lemma \ref{thm:cov:reg} below (applied with probability measure $\PP_{f_0}^{|\mathbb{D}_{n,1}}$, $k=n^{d/(2\beta+d)}$ and $\Pi_k=\hat\Pi_k$) that for all $\eps>0$ there exists $\delta_{\eps,\alpha}>0$ such that
 \begin{equation*}
 \sup_{f_0\in S^\beta_d (M)} \mathbb{E}_{f_0}^{|\mathbb{D}_{n,1}} \hat\Pi_{k}\Big(f_{\theta}:\,\|f_{\theta}- \hat{f}\|_2\leq \delta_{\eps,\alpha}  \sqrt{k/n}| \mathbb{D}_{n,2} \Big) \leq \eps(1-\alpha)/3,
 \end{equation*}
 with $\PP_{f_0}$-probability tending to one.  Let us take $L_{\epsilon, \alpha} \geq M_\epsilon/\delta_{\epsilon, \alpha}$ and combine the preceding display with Markov's inequality,
\begin{align*}
    \PP_{f_0}&\Big( \hat\Pi_k\big(\theta:\, \|f_\theta- \hat{f}\|_2 \leq \frac{M_\epsilon \epsilon_n}{L_{\eps,\alpha}} | \mathbb{D}_{n,2} \big) \geq 1 - \alpha \Big)\nonumber\\
 &\quad\leq \frac{\EE_{f_0}\Big(\hat\Pi_k\big(\theta:\, \|f_\theta- \hat{f}\|_2 \leq \delta_{\epsilon,\alpha} \epsilon_n|\mathbb{D}_{n,2} \big)\Big)}{1 - \alpha}\leq \eps/3,
\end{align*}
concluding the proof.

\begin{remark}
  We point out that the extra multiplicative term $\log^3(n)$ is the result of the lack of sharpness in the convergence rate of deep neural network estimator $f_{n}^*$. Sharper bounds for this estimation would result in smaller blow up factor.
\end{remark}

We state below that under the preceding assumptions the EBDNN credible sets are valid frequentist confidence sets as well.

\section{Approximation of Splines using Deep neural networks}\label{sec:approx:splines}

This section considers the construction of orthonormal basis $\phi_{1}, \dots, \phi_{k}$ in $d$-dimension using neural networks. We first show that splines can be approximated well with neural networks and then we achieve near orthonormality by rescaling. We summarize the main results in the following lemma.

\begin{lem}\label{Lem:BasisProperties}
  There exist DNN basisfunctions $\phi_{1}, \dots, \phi_{k}$ with $k=n^{d/(d+2\beta)}$ such that
  \begin{itemize}
    \item For every $f_0\in S^{\beta}_d(M) \cap L_\infty(M)$, with $\beta>d/2$, there exists $\theta=(\theta_1,...,\theta_{k})\in\ell_{\infty}(1)$ such that $ \| f_0 - \sum_{j=1}^k \theta_j \phi_{j} \|_{2} < \epsilon_n$ with $\eps_n=n^{-\beta/(d+2\beta)}$.
    \item The rescaled DNN basis functions $\sqrt{k}\phi_{1}, \dots, \sqrt{k}\phi_{k}$ are nearly orthonormal in the sense of Definition~\ref{def:DNNBasisorthonormal}.
    \item The basis functions are bounded in supremum norm, i.e. $\|\phi_{j}\|_{\infty}\leq 2$, $j=1,...,k$.  
  \end{itemize}
\end{lem}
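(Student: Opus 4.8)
The plan is to route the construction through tensor-product B-splines. I first fix, as recalled in \Cref{sec:B_splines}, an order-$q$ (with $q>\beta$) tensor-product B-spline system $B_1,\dots,B_k$ on $[0,1]^d$ built on $N=k^{1/d}=n^{1/(d+2\beta)}$ equispaced knots per coordinate, and I then approximate each $B_j$ to very high accuracy by a ReLU network $\phi_j$ having the architecture prescribed in \Cref{ass:dnn}. Each $B_j$ is a product of $d$ univariate piecewise polynomials of degree $<q$, each supported on $O(1)$ mesh cells; consequently $0\le B_j\le 1$, $\|B_j\|_2^2\asymp N^{-d}=k^{-1}$, only $O(q^d)$ of the $B_j$ overlap at any point, and the Gram matrix $G_{ij}=\langle B_i,B_j\rangle$ is banded.

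For the first item I invoke classical spline approximation (collected in \Cref{sec:B_splines}): for every $f_0\in S^\beta_d(M)\cap L_\infty(M)$ with $\beta>d/2$ there are coefficients $c$ with $\|f_0-\sum_j c_j B_j\|_2\lesssim N^{-\beta}=\eps_n$, and a bounded quasi-interpolant (de~Boor--Fix) gives $\|c\|_\infty\lesssim\|f_0\|_\infty$. These constants are $O(1)$; the specific values $1$ (for $\theta$) and $2$ (for $\|\phi_j\|_\infty$) in the statement are inessential and are fixed by the normalisation of the B-spline system. Swapping $B_j$ for $\phi_j$ costs at most $\|\sum_j c_j(B_j-\phi_j)\|_2\le k\,\max_j\|B_j-\phi_j\|_2$, which is negligible against $\eps_n$ as soon as the per-function network error is a sufficiently large negative power of $n$; this yields $\theta=c\in\ell_\infty(1)$ with $\|f_0-\sum_j\theta_j\phi_j\|_2<\eps_n$.

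The second item rests on the $L_2$-stability of the B-spline basis, $c^T G\,c\asymp k^{-1}\|c\|_2^2$, which says exactly that the Gram matrix of the rescaled system $\{\sqrt{k}B_j\}$, namely $kG$, satisfies $c_1'\,\II_k\le kG\le c_2'\,\II_k$, i.e.\ $\{\sqrt{k}B_j\}$ is near orthonormal in the sense of \Cref{def:DNNBasisorthonormal}. To transfer this to $\{\sqrt{k}\phi_j\}$ I use a perturbation estimate: writing $\phi_j=B_j+e_j$ with $\|e_j\|_2\le\eta$, every entry of the difference between the Gram matrix of $\{\sqrt{k}\phi_j\}$ and $kG$ is at most $k(2k^{-1/2}\eta+\eta^2)$, so the spectral norm of this perturbation is bounded by $2k^{3/2}\eta+k^2\eta^2$, which vanishes once $\eta\ll k^{-3/2}$. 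Since $k<n$ and the ReLU error can be pushed below any negative power of $n$, the Gram matrix of $\{\sqrt{k}\phi_j\}$ lies between $c_1\,\II_k$ and $c_2\,\II_k$, as required. The third item is then immediate: choosing the approximation so that $\|\phi_j-B_j\|_\infty\le 1$ (and, if desired, clipping through a bounded ReLU sub-network) gives $\|\phi_j\|_\infty\le 2$; recalling that the DNN basis entering \Cref{ass:dnn} is $\hat\phi_j=\sqrt{k}\phi_j$, the bounded overlap of the (essentially localized) $\phi_j$ also yields the coordinate bound $\|\theta^T\hat\phi\|_\infty\le C\sqrt{k}\|\theta\|_\infty$.

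The main obstacle is the network construction itself. I must realise the tensor-product B-splines within the architecture of \Cref{ass:dnn} — hidden-layer width $6k$, a penultimate layer of width $k$, depth $L=\log(n)\ceil{\log_2(\max(4\beta,4d))}$ and $s=k\log n$ sparsity — while keeping each error below the negative power of $n$ demanded above. The building blocks are the ReLU approximations of $x\mapsto x^2$, and hence of multiplication and of degree-$<q$ polynomials, from \cite{schmidt-hieberNonparametricRegressionUsing2017,suzukiAdaptivityDeepReLU2018}; composing $O(\log(\max(\beta,d)))$ such blocks to accuracy $2^{-cL}=n^{-c'}$ is exactly what the logarithmic depth buys, placing one sub-network per active knot cell uses the width $6k$, and the locality of the B-splines keeps the number of nonzero weights at $O(k\log n)$. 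The delicate part is to check that this composition simultaneously respects the sparsity budget, the supremum-norm bound, and the near-orthonormality perturbation tolerance $\eta\ll k^{-3/2}$ — that is, that all three requirements can be met by a single network of the prescribed shape.
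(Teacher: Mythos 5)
Your proposal is correct and follows essentially the same route as the paper: approximate tensor-product B-splines by ReLU networks (Proposition 1 of \cite{suzukiAdaptivityDeepReLU2018}), use classical spline approximation for the first bullet, B-spline $L_2$-stability plus a Gram-matrix perturbation bound for near-orthonormality, and the triangle inequality for the sup-norm bound. The only difference is cosmetic: where the paper exploits the shared local supports to get a banded perturbation matrix (Wielandt/Weyl), you use a crude full-matrix entrywise bound requiring $\eta\ll k^{-3/2}$, which still suffices since the $C/n$ accuracy of the network approximation beats $k^{-3/2}$ whenever $\beta>d/4$.
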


\begin{proof}
In view of Lemma \ref{thm:ApproximationPowerBySplines} there exists $\theta=(\theta_1,...,\theta_{k})\in \mathbb{R}^{k}$ such that $\|f_0-\sum_{j=1}^{k} \theta_j B_{j}\|_2\leq n^{-\beta/(d+2\beta)}$, where $B_j$, $j=1,...,k$ denote the cardinal B-splines of order $q\geq \beta$, see \eqref{eq: B:splines} and teh remark below it about the single index representation. Moreover, if $\|f_0 \|_\infty < M$, then one can choose $\theta\in\mathbb{R}^k$ so that $\| \theta \|_\infty < M$. Furthermore, in view of Proposition 1 of \cite{suzukiAdaptivityDeepReLU2018} one can construct a DNN basis 
\begin{align}
\phi_{1}, \dots, \phi_{k}, \quad\text{such that}\quad \|B_{j}-\phi_{j}\|_{\infty}\leq C/n,\quad j=1,...,k,\label{eq:dnn:basis:Suzuki}
\end{align}
for some universal constant $C>0$. Therefore, by triangle inequality 
\begin{align*}
 \| f_0 - \sum_{j=1}^{k} \theta_j \phi_{j} \|_{2}&\leq \| f_0-\sum_{j=1}^{k} \theta_j B_{j}\|_2+ \| \sum_{j=1}^{k} \theta_j (B_{j}-\phi_j)\|_2 \\
&\lesssim n^{-\beta/(d+2\beta)}+  M k/n\lesssim n^{-\beta/(d+2\beta)}.
\end{align*}
Then in Lemma \ref{lem:SplineAproxNearlyOrthogonal} below we show that the above DNN basis $(\phi_{j})_{j=1,..,k}$ inherits the near orthogonality of B-splines, which is verified for dimension $d$ in Lemma \ref{lem:orthogonality}. The boundedness of the B-splines, will be also inherited by the above DNN basis $(\phi_{j})_{j=1,..,k}$ in view of Lemma~\ref{lem:SplineAproxBounded}.
Moreover, basis can be rescaled in such a way that the coefficients are in the interval $[-1,1]$.
\end{proof}

We provide below the two lemmas used in the proof of the previous statement.

\begin{lem}\label{lem:SplineAproxNearlyOrthogonal}
  The rescaled DNN basis $\sqrt{k}\phi = \left(\sqrt{k}\phi_{1}, \dots, \sqrt{k}\phi_{k}\right)$ given in~\eqref{eq:dnn:basis:Suzuki} is nearly orthonormal in the sense of Definition~\ref{def:DNNBasisorthonormal}.
\end{lem}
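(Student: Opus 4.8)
The plan is to treat the rescaled Gram matrix $\Sigma_k$ of $\sqrt k\,\phi$, whose entries are $(\Sigma_k)_{ij}=\langle\sqrt k\,\phi_i,\sqrt k\,\phi_j\rangle_2=k\langle\phi_i,\phi_j\rangle_2$, as a small operator-norm perturbation of the corresponding rescaled B-spline Gram matrix. Writing $G^\phi_{ij}=\langle\phi_i,\phi_j\rangle_2$ and $G^B_{ij}=\langle B_i,B_j\rangle_2$, Lemma \ref{lem:orthogonality} already provides that $k\,G^B$ is nearly orthonormal, $c_1'\II_k\le k\,G^B\le c_2'\II_k$ for some $0<c_1'<c_2'<\infty$. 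It therefore suffices to show $\|k\,G^\phi-k\,G^B\|_{\mathrm{op}}\to0$ and then invoke eigenvalue stability to transfer the spectral bounds.

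First I would record the elementary estimate $\|e_j\|_2\le\|e_j\|_\infty\le C/n$ for the discrepancies $e_j:=\phi_j-B_j$, using \eqref{eq:dnn:basis:Suzuki} together with the fact that $[0,1]^d$ has unit volume. The technical heart of the argument is to control these $k^2$ inner-product errors simultaneously. Rather than estimating the entries of $G^\phi-G^B$ one by one, I would pass to the synthesis operators $\Phi,\mathbf B\colon\RR^k\to L^2([0,1]^d)$ given by $\Phi\theta=\sum_j\theta_j\phi_j$ and $\mathbf B\theta=\sum_j\theta_j B_j$, so that $G^\phi=\Phi^*\Phi$ and $G^B=\mathbf B^*\mathbf B$. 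For the error operator one has, by the triangle inequality and the bound $\|\theta\|_1\le\sqrt k\,\|\theta\|_2$,
\[
\Big\|(\Phi-\mathbf B)\theta\Big\|_2\le\sum_{j=1}^k|\theta_j|\,\|e_j\|_2\le\frac{C}{n}\|\theta\|_1\le\frac{C\sqrt k}{n}\|\theta\|_2,
\]
so $\|\Phi-\mathbf B\|_{\mathrm{op}}\le C\sqrt k/n$; and from the near-orthonormality of the B-splines, $\|\mathbf B\|_{\mathrm{op}}=\|G^B\|_{\mathrm{op}}^{1/2}\le\sqrt{c_2'/k}$, whence also $\|\Phi\|_{\mathrm{op}}\le\sqrt{c_2'/k}+C\sqrt k/n$.

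Combining these through the operator identity $G^\phi-G^B=\Phi^*(\Phi-\mathbf B)+(\Phi-\mathbf B)^*\mathbf B$ and submultiplicativity yields
\[
\|G^\phi-G^B\|_{\mathrm{op}}\le\|\Phi-\mathbf B\|_{\mathrm{op}}\big(\|\Phi\|_{\mathrm{op}}+\|\mathbf B\|_{\mathrm{op}}\big)\lesssim\frac{1}{n}+\frac{k}{n^2},
\]
so that $\|k\,G^\phi-k\,G^B\|_{\mathrm{op}}\lesssim k/n+(k/n)^2$. Since $k/n=n^{-2\beta/(d+2\beta)}\to0$, this perturbation vanishes, and Weyl's inequality transfers the eigenvalue bounds from $k\,G^B$ to $\Sigma_k=k\,G^\phi$, giving $\tfrac12 c_1'\II_k\le\Sigma_k\le 2c_2'\II_k$ for $n$ large enough, i.e. near orthonormality in the sense of Definition \ref{def:DNNBasisorthonormal}. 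The one point requiring care is that the $\phi_j$, unlike the compactly supported $B_j$, need not have local supports, so $G^\phi$ is in general a full matrix and a naive entrywise estimate would accumulate $k^2$ errors; routing the bound through $\|\Phi-\mathbf B\|_{\mathrm{op}}$ is exactly what keeps it at order $k/n$. I expect this to be the main obstacle, and the interplay of the $\sqrt k$ rescaling with the rate $k/n\to0$ to be the decisive quantitative check.
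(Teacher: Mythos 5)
Your proof is correct, but it takes a genuinely different route from the paper's. The paper bounds the perturbation $R_k = k(G^\phi-G^B)$ entrywise by $C'k/n$, invokes Lemma 1 of \cite{suzukiAdaptivityDeepReLU2018} to argue that the $\phi_j$ inherit the local supports of the B-splines so that $R_k$ has at most $(2q)^d$ nonzero entries per row, and then controls its spectral radius via Wielandt's theorem and Perron--Frobenius before applying Weyl; the bound it states is of order $k^2/n$, which vanishes only under the standing assumption $\beta>d/2$. You instead factor $G^\phi-G^B=\Phi^*(\Phi-\mathbf{B})+(\Phi-\mathbf{B})^*\mathbf{B}$ through the synthesis operators and bound $\|\Phi-\mathbf{B}\|_{\mathrm{op}}\leq C\sqrt{k}/n$ by a crude $\ell_1\to\ell_2$ estimate, which yields $\|k(G^\phi-G^B)\|_{\mathrm{op}}\lesssim k/n$ before the same Weyl step. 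Your route buys two things: it dispenses entirely with the support-preservation property of the approximating network (the worry you flag, that $G^\phi$ could be a full matrix, is exactly what the paper handles by citing that property, and you sidestep it), and it gives the sharper perturbation rate $k/n$, which tends to zero for every $\beta>0$ rather than only for $\beta>d/2$. The paper's route is more elementary matrix analysis and makes the structural locality of the DNN basis explicit. Both arguments rest on the same two external inputs: the sup-norm approximation \eqref{eq:dnn:basis:Suzuki} and the near orthogonality of the rescaled B-splines from Lemma \ref{lem:orthogonality}.
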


\begin{proof}
  In view of Lemma 1 of \cite{suzukiAdaptivityDeepReLU2018} the above DNN basis has the same support as the B-splines of order $q = \ceil{\beta}$. Let us define the matrices $Q_k,R_k\in \mathbb{R}^{k\times k}$ as
\begin{align*}
(Q_{k})_{i,j} &= \Inner{\sqrt{k}B_{i}}{\sqrt{k}B_{j}}= k\int_0^1 B_{i} (x)B_{j}(x) \dd x,\\
(R_{k})_{i,j} &= \Inner{\sqrt{k}\phi_{i}}{\sqrt{k}\phi_{j}} - \Inner{\sqrt{k}B_{i}}{\sqrt{k}B_{j}}= k\int_0^1 \phi_{i}(x) \phi_{j}(x) - B_{i} (x)B_{j}(x) dx,
\end{align*}
for $i,j\in\{1,\dots,k\}$. Then $Q_k + R_k$ is the matrix consisting of the innerproducts in the constructed basis. Note that in view of \eqref{eq:dnn:basis:Suzuki} there exists a constant $C' >0 $ such that $|(R_{k})_{i,j}| < C'k/n$. Furthermore, we note that a B-spline basis function of order $q$ has intersecting support with at most $(2q)^d$ other B-spline basis functions. In view of Lemma 1 of \cite{suzukiAdaptivityDeepReLU2018}, the same holds for the $\phi_j$, $j=1,...,k$ basis. This means that there are at most $(2q)^d$ non-zero terms in every row or column and hence in total we have at most $(2q)^dk$ nonzero cells in the matrix.

Define $(M_{k})_{i,j} = |(R_{k})_{i,j}|$. Then the spectral radius of $M_k$ is an upper bound of the spectral radius of $R_k$ by Wielandt's theorem \cite{weissteinWielandtTheorem}. Since $M_k$ is a nonnegative matrix, in view of the Perron-Frobenius theorem \cite{perronZurTheorieMatrices1907,frobeniusUeberMatrizenAus1912}, the largest eigenvalue in absolute value is bounded by constant times $k^2/n$. Next note that both $Q_k$ and $R_k$ are symmetric real matrices. Therefore, in view of the Weyl inequalities (see equation (1.54) of \cite{taoTopicsRandomMatrix}), the eigenvalues of $Q_k+R_k$ can differ at most by constant times $k^2/n=o(1)$ from the eigenvalues of $Q_k$. We conclude the proof by noting that in view of Lemma \ref{lem:orthogonality} the eigenvalues of $Q_k$ are bounded from below by $c$ and from above by $C$ for $n$ large enough, hence the Gram matrix $Q_k+R_k$ also satisfies 
\[
  \frac{1}{2} c \II_k \leq \left( Q_k + R_k \right) \leq 2 C \II_k.
\]
This means that the rescaled basis $\sqrt{k}\phi$ satisfies the near orthogonality requirement, see Assumption \ref{def:DNNBasisorthonormal}. 
\end{proof}

\begin{lem}\label{lem:SplineAproxBounded}
  The DNN basis given in \eqref{eq:dnn:basis:Suzuki} satisfies that $\|\phi_{j} \|_{\infty}\leq 2.$ 
\end{lem}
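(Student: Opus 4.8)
The plan is to combine the supremum-norm approximation bound \eqref{eq:dnn:basis:Suzuki} with the well-known uniform boundedness of the cardinal B-spline basis. First I would invoke the triangle inequality in the supremum norm to write, for each $j\in\{1,\dots,k\}$,
\[
  \|\phi_{j}\|_{\infty}\leq \|B_{j}\|_{\infty}+\|\phi_{j}-B_{j}\|_{\infty}.
\]
The second term is controlled directly by \eqref{eq:dnn:basis:Suzuki}, which guarantees $\|\phi_{j}-B_{j}\|_{\infty}\leq C/n$ for the universal constant $C>0$ coming from Proposition 1 of \cite{suzukiAdaptivityDeepReLU2018}. This term is therefore negligible as $n\to\infty$, so everything reduces to bounding the B-spline term.

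It then remains to bound $\|B_{j}\|_{\infty}$. Here I would use the standard fact that the cardinal B-splines $B_{j}$ of order $q\geq\beta$ form a nonnegative partition of unity, i.e. $0\leq B_{j}$ and $\sum_{j=1}^{k}B_{j}\equiv 1$ on $[0,1]^{d}$ in the tensor-product construction recalled in Section \ref{sec:B_splines} and underlying Lemma \ref{thm:ApproximationPowerBySplines}. Consequently each individual spline satisfies $\|B_{j}\|_{\infty}\leq 1$, since at every point $x$ the value $B_{j}(x)$ is a single nonnegative summand of a sum equal to one. Substituting into the previous display gives $\|\phi_{j}\|_{\infty}\leq 1+C/n$, which is at most $2$ as soon as $n\geq C$; as we are only concerned with the asymptotic regime $n\to\infty$, this establishes the claim uniformly in $j$.

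The argument is essentially routine, so there is no genuine analytic obstacle; the only point requiring care is bookkeeping the normalization convention for the B-splines. I would make sure the $B_{j}$ used here are exactly the unscaled partition-of-unity B-splines of Lemma \ref{thm:ApproximationPowerBySplines}, supported on cells of width of order $1/k$, rather than the $\sqrt{k}$-rescaled versions appearing in the Gram-matrix computation of Lemma \ref{lem:SplineAproxNearlyOrthogonal}. This is precisely the normalization for which the diagonal entries $(Q_{k})_{i,i}=k\int_{0}^{1}B_{i}^{2}$ stay of order one, confirming that $\int B_{i}^{2}$ is of order $1/k$ and hence that $\|B_{j}\|_{\infty}\leq 1$ is the correct pointwise bound to feed into the triangle inequality. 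With that convention fixed, the bound follows immediately.
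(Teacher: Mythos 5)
Your proof is correct and follows essentially the same route as the paper: triangle inequality combined with the $C/n$ bound from \eqref{eq:dnn:basis:Suzuki} and the fact that each cardinal B-spline is bounded by $1$ in supremum norm (the paper obtains this from Lemma \ref{lem:orthogonality}, you from the partition-of-unity property, which is the same underlying fact). Your closing remark about the normalization convention is a sensible sanity check and consistent with the paper's conventions.
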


\begin{proof}
  In view of Lemma \ref{lem:orthogonality} the cardinal  B-splines are bounded in supnorm by $1$. This implies our statement by~\eqref{eq:dnn:basis:Suzuki} and applying the triangle inequality.
\end{proof}

\section{Cardinal B-splines}\label{sec:B_splines}
One of the key steps in the proof of Lemma \ref{Lem:BasisProperties} is to use approximation of B-splines with deep neural networks, derived in \cite{suzukiAdaptivityDeepReLU2018}. In this chapter we collect properties of the cardinal B-splines used in our analysis. More specifically we show that they can be used to approximate functions in Besov spaces and we verify that they form a bounded, near orthogonal basis. 

We start by defining cardinal B-splines of order $q$ in $[0,1]$ and then extend the definition with tensors to the $d$-dimensional unit cube. Given $J+1$ knots $0=t_0<t_1<...<t_J=1$, the function $f: [0,1]\mapsto \mathbb{R}$ is a spline of order $q$ if its restriction to the interval $[t_{i},t_{i+1}]$, $i=0,...,J-1$ is a polynomial of degree at most $q-1$ and $f\in C^{q-2}[0,1]$ (provided that $q\geq 2$). For simplicity we will consider equidistant knots, i.e. $t_i=i/J$, $i\in\{0,...,J\}$, but our results can be extended to a more general knot structure as well. 

Splines form a linear space and a convenient basis for this space are given by B-splines $B_{1,q},...,B_{J,q}$. B-splines are defined recursively in the following way. First let us introduce additional knots at the boundary $t_{-q+1}=...=t_{-1}=t_{0}=0$ and $t_{J}=t_{J+1}=...=t_{J+q-1}$. Then we define the first order B-spline basis as $B_{j,1}(x)=1_{t_j\leq x<t_{j+1}}$, $j=0,...,J-1$. For higher order basis we use the recursive formula
$$B_{j,q}(x)=\frac{x-t_j}{t_{j+q-1}-t_j}B_{j,q-1}(x)+ \frac{t_{j+q}-x}{t_{j+q}-t_{j+1}}B_{j+1,q-1}(x),\quad j=-q+1,...,J-1.$$
From now on for simplicity we omit the order $q$ of the B-splines from the notation, writing $B_1,...,B_J$. We extend B-splines to dimension $d$ by tensorisation. For  $x \in [0,1]^d$ the $d$-dimensional cardinal B-splines are formed by taking the product of one dimensional B-splines, i.e. for  $j\in\{1,...,J\}^d$ and $x\in[0,1]^d$ we define 
\begin{align}
B_{j}(x) = \prod_{\ell = 1}^d B_{j_\ell}(x_\ell).\label{eq: B:splines}
\end{align}
\begin{remark}
We note that the $d$-dimensional index $j\in \{1,...,J\}^d$ can be replaced by a single index running from $1$ to $J^d =: k$. In this section for convenience we work with the multi-index formulation, but in the rest of the paper we consider the single index formulation.
\end{remark}

Next we list a few key properties of $d$-dimensional cardinal B-splines used in our proofs. In view of Chapter 12 of \cite{schumakerSplineFunctionsBasic2007} (see Definition 12.3 and Theorems 12.4-12.8) and Lemma  E.7 of \cite{ghosalFundamentalsNonparametricBayesian2017}, the cardinal B-splines have optimal approximation properties in the following sense.

\begin{lem}
  \label{thm:ApproximationPowerBySplines}
  Let $S$ be the space spanned by the cardinal B-splines of order $q\geq \beta$. 
  Then there exists a constant $C>0$ such that for all $f \in S^\beta_d(M)$ and all integers $\alpha \leq \beta$
  \[
    d(f, S) = \inf_{s \in S} \| f - s \|_2 \leq C k^{-\beta/d} \sum_{l = 1}^{d}  \left\| \frac{\partial^\alpha f}{\partial x_l^\alpha} \right\|_2,
  \]
with $k=J^d$. Moreover, if $\|f \|_\infty < F$, then one can pick $s$ such that $\| s \|_\infty< F$.
\end{lem}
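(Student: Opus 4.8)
The plan is to reconstruct this as the classical approximation theorem for tensor-product cardinal splines, built from a local quasi-interpolation operator together with a telescoping identity in the coordinate directions. First I would settle the univariate picture. With equidistant knots of mesh size $h = 1/J$, construct a univariate quasi-interpolant $Q_h : L^2[0,1] \to S$ of de Boor--Fix type, whose coefficients are local linear functionals of $f$ supported near each knot interval. Two properties are needed: (i) $Q_h$ reproduces all polynomials of degree at most $q-1$, which is available because the B-splines have order $q \geq \beta$; and (ii) $Q_h$ is uniformly $L^2$-bounded, $\| Q_h f \|_2 \lesssim \| f \|_2$, which follows from the local $L^2$-stability (Riesz-basis property) of the cardinal B-spline basis, the same property underlying \Cref{lem:orthogonality}.

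Second, I would establish the univariate local error bound by the Bramble--Hilbert lemma. On each knot cell $I$ of length $h$, polynomial reproduction up to degree $\alpha - 1$ (valid for any integer $\alpha \leq \beta \leq q$) together with the locality of $Q_h$ gives $\| f - Q_h f \|_{L^2(I)} \lesssim h^\alpha \| f^{(\alpha)} \|_{L^2(\tilde I)}$, where $\tilde I$ is the finite union of neighbouring cells meeting the supports of the relevant B-splines. Summing over all cells and using the bounded overlap of B-spline supports produces the global univariate estimate $\| f - Q_h f \|_2 \lesssim h^\alpha \| f^{(\alpha)} \|_2 = k^{-\alpha/d} \| f^{(\alpha)} \|_2$, after recalling $h = J^{-1}$ and $k = J^d$.

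Third, I would pass to dimension $d$ by tensorization. Setting $Q = Q_h^{(1)} \cdots Q_h^{(d)}$, where $Q_h^{(l)}$ applies the univariate operator in the $l$th coordinate, the key device is the telescoping identity
\[
  I - Q = \sum_{l=1}^{d} Q_h^{(1)} \cdots Q_h^{(l-1)} \bigl( I - Q_h^{(l)} \bigr).
\]
Each summand is controlled by applying the univariate bound in direction $l$ to generate the factor $h^\alpha \| \partial^\alpha f / \partial x_l^\alpha \|_2$, while the leading partial product $Q_h^{(1)} \cdots Q_h^{(l-1)}$ is absorbed by the uniform $L^2$-boundedness from step one. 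Summing over $l$ gives exactly $d(f,S) \leq \| f - Qf \|_2 \lesssim k^{-\alpha/d} \sum_{l=1}^{d} \| \partial^\alpha f / \partial x_l^\alpha \|_2$, the displayed rate $k^{-\beta/d}$ being the case $\alpha = \beta$ relevant for the smoothness class $S_d^{\beta}(M)$. This tensor telescoping is precisely what yields the sum of \emph{pure} coordinate derivatives on the right-hand side rather than full mixed derivatives.

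Finally, for the supremum-norm claim, I would choose $Q_h$ so that its coefficients are bounded by local suprema of $f$; since the cardinal B-splines are nonnegative and form a partition of unity, $\sum_j B_j \equiv 1$, this forces $\| Qf \|_\infty \leq \| f \|_\infty < F$, so $s = Qf$ inherits the bound. The main obstacle I anticipate is the bookkeeping in step one, namely verifying simultaneously the $L^2$-stability of the quasi-interpolant (needed to absorb the partial products in the telescoping) and its sup-norm boundedness through the partition of unity, while maintaining polynomial reproduction up to the required degree; once these structural properties of $Q_h$ are in place the error analysis is a routine Bramble--Hilbert plus summation argument. Alternatively, one may simply invoke Chapter 12 of \cite{schumakerSplineFunctionsBasic2007} and Lemma E.7 of \cite{ghosalFundamentalsNonparametricBayesian2017}, where these estimates are recorded.
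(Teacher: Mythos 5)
Your proposal is correct and is essentially the paper's own argument: the paper gives no proof beyond invoking Chapter 12 of \cite{schumakerSplineFunctionsBasic2007} (Theorems 12.4--12.8) and Lemma E.7 of \cite{ghosalFundamentalsNonparametricBayesian2017}, and the quasi-interpolant/Bramble--Hilbert/tensor-telescoping argument you reconstruct is precisely what those cited results contain, as you yourself note in your closing sentence. The only delicate point is the final sup-norm claim, where a degree-$(q-1)$-reproducing quasi-interpolant generally bounds the coefficients by a constant \emph{times} the local supremum of $f$ rather than by the supremum itself, so obtaining $\|s\|_\infty < F$ with constant one requires the specific construction recorded in Lemma E.7 of \cite{ghosalFundamentalsNonparametricBayesian2017} (coefficients bounded by $\|f\|_\infty$ combined with the nonnegative partition of unity), exactly as you sketch.
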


Next we show that cardinal B splines are near orthogonal. The one dimensional case was considered in Lemma E.6 of \cite{ghosalFundamentalsNonparametricBayesian2017}. Here we extend these results to dimension $d$. Note that by tensorisation we will have $k = J^d$ spline basis functions.

\begin{lem}\label{lem:orthogonality}
  Let us denote by $B=(B_{j})_{j\in\{1,...,J\}^d}$ the collection of B-splines and by $\theta=(\theta_j)_{j\in\{1,...,J\}^d}$ the corresponding coefficients. Let $k= J^d$. Then there exists constant $c\in(0,1)$ such that
  \begin{align*}
    &c\| \theta \|_\infty \leq          \| \theta^T B \|_\infty     \leq \| \theta \|_\infty, \\
    &c\| \theta \|_2      \leq  \sqrt{k}\| \theta^T B \|_2          \leq \| \theta \|_2.
  \end{align*}
\end{lem}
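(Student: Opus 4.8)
The plan is to bootstrap the one-dimensional estimates (Lemma E.6 of \cite{ghosalFundamentalsNonparametricBayesian2017}, which supplies a constant $c_0\in(0,1)$ with $c_0\|\eta\|_\infty\leq\|\eta^T B\|_\infty\leq\|\eta\|_\infty$ and $c_0\|\eta\|_2\leq\sqrt{J}\|\eta^T B\|_2\leq\|\eta\|_2$ for univariate cardinal B-splines) up to dimension $d$ by exploiting the tensor structure in \eqref{eq: B:splines}. I would treat the $L_\infty$ and $L_2$ equivalences separately, since they call for different tensorisation arguments, and in both cases the one-dimensional statement serves as the induction base or the atomic input.

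For the $L_\infty$ bounds I would first dispatch the upper bound using that cardinal B-splines form a nonnegative partition of unity: $\sum_{j_\ell}B_{j_\ell}\equiv 1$ on $[0,1]$ implies, by taking products over $\ell$, that $\sum_{j\in\{1,\dots,J\}^d}B_j\equiv 1$ on $[0,1]^d$ with all $B_j\geq 0$, whence $|\theta^T B(x)|\leq\|\theta\|_\infty\sum_j B_j(x)=\|\theta\|_\infty$. The lower bound I would prove by induction on $d$. Writing $f=\theta^T B$ and grouping the first coordinate, $f(x)=\sum_{j_1}B_{j_1}(x_1)g_{j_1}(x_2,\dots,x_d)$, where each $g_{j_1}$ is a $(d-1)$-dimensional spline whose coefficients are the slice $(\theta_{j_1,j_2,\dots,j_d})$. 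For each fixed $(x_2,\dots,x_d)$ the map $x_1\mapsto f(x_1,\dots)$ is a univariate spline with coefficient vector $(g_{j_1}(x_2,\dots,x_d))_{j_1}$, so the one-dimensional lower bound gives $\sup_{x_1}|f|\geq c_0\max_{j_1}|g_{j_1}(x_2,\dots,x_d)|$. Taking the supremum over the remaining coordinates and then applying the $(d-1)$-dimensional lower bound to each $g_{j_1}$ yields $\|f\|_\infty\geq c_0\max_{j_1}\|g_{j_1}\|_\infty\geq c_0\,c_0^{d-1}\|\theta\|_\infty$, so the constant is $c_0^d$.

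For the $L_2$ equivalence I would argue directly through the Gram matrix. Because the same univariate basis is used in every coordinate, the $d$-dimensional Gram matrix factorises as a Kronecker power, $\Sigma=(\Sigma^{(1)})^{\otimes d}$ with $(\Sigma^{(1)})_{i,i'}=\langle B_i,B_{i'}\rangle_2$, since $\langle B_j,B_{j'}\rangle_2=\prod_\ell\langle B_{j_\ell},B_{j'_\ell}\rangle_2$. The one-dimensional $L_2$ bounds say exactly that $J\Sigma^{(1)}$ has all eigenvalues in $[c_0^2,1]$. As the eigenvalues of a Kronecker power are the $d$-fold products of the factor's eigenvalues, $k\Sigma=(J\Sigma^{(1)})^{\otimes d}$ (using $k=J^d$) has eigenvalues in $[c_0^{2d},1]$. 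Reading this back as a quadratic form gives $c_0^{2d}\|\theta\|_2^2\leq k\|\theta^T B\|_2^2\leq\|\theta\|_2^2$, which is the claimed equivalence with constant $c_0^d$. Relabelling $c:=c_0^d$ then furnishes a single constant valid for both displays.

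I expect the only genuinely delicate point to be the $L_\infty$ lower-bound induction: one must verify that the inner coefficients $g_{j_1}$ really define lower-dimensional splines (so the induction hypothesis applies) and that the univariate estimate may be applied slice-by-slice with a constant $c_0$ independent of the frozen coordinates. Everything else — the partition of unity, the Kronecker factorisation of $\Sigma$, and the product rule for Kronecker eigenvalues — is routine once the tensor structure is made explicit. The upper bounds come out with the sharp constant $1$, matching the statement, while the lower constants degrade to $c_0^d$, which is harmless since $d$ is fixed.
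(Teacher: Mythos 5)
Your proof is correct, but it follows a genuinely different route from the paper's. For the supremum-norm bounds the paper simply invokes Lemma 2.2 of \cite{dejongeAdaptiveEstimationMultivariate2012}, which already covers the multivariate tensor case, whereas you rederive them from the univariate statement: the partition-of-unity argument gives the sharp upper constant $1$, and your slice-wise induction (freezing $x_2,\dots,x_d$, applying the one-dimensional lower bound to the coefficient vector $(g_{j_1}(x_2,\dots,x_d))_{j_1}$, then recursing on each slice $g_{j_1}$) is sound because the one-dimensional constant $c_0$ depends only on $q$ and not on the frozen coordinates. For the $L_2$ equivalence the divergence is more substantial: the paper localizes to the hypercubes $I_i$, maps each to a reference cube, uses linear independence of the restricted splines and equivalence of norms on the resulting finite-dimensional polynomial space, and then patches the finitely many local configurations together using the bounded overlap of supports; you instead observe that the Gram matrix factorizes as the Kronecker power $\Sigma=(\Sigma^{(1)})^{\otimes d}$ (valid since the domain, the Lebesgue measure and the basis are all products), read the one-dimensional bounds as a two-sided eigenvalue bound $c_0^2 \leq \lambda(J\Sigma^{(1)})\leq 1$, and conclude via the product rule for Kronecker eigenvalues. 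Your argument is shorter, yields the explicit constant $c_0^{2d}$ for the quadratic form, and sidesteps the bookkeeping over the $(2q-1)^d$ distinct local configurations (where one must check uniformity of the norm-equivalence constants, including near the boundary); the price is that it is tied to the exact tensor-product structure with a common univariate factor, while the paper's localization argument would survive non-uniform knots or locally supported bases that are not exact tensor products. Both proofs take the univariate result (Lemma E.6 of \cite{ghosalFundamentalsNonparametricBayesian2017}) as the atomic input, so no gap remains.
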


\begin{proof}
  The bounds for the supremum norm follow from Lemma 2.2 of \cite{dejongeAdaptiveEstimationMultivariate2012}, hence it remained to deal with the bounds for the $\ell_2$-norm.

Let $I_{i}$, $i\in\{1,...,J\}^d$, denote the hypercube $\prod_{\ell = 1}^d [(i_\ell - 1)/J, i_\ell/J ]$ and $C_{i}$, $i\in\{1,...,J\}^d$ the collection of B-splines $B_j$, $j\in\{1,...,J\}^d$ which attain a nonzero value on the corresponding hypercube $I_{i}$. Then
  \[
    \| \theta^T B \|_2^2 = \int_{[0,1]^d} ( \theta^T B(x))^2 \dd x = \sum_{i\in\{1,...,J\}^d } \int_{I_{i}} \left(\sum_{j \in C_{i}} \theta_j B_j(x) \right)^2 \dd x.
  \]

Note that for a one-dimensional cardinal B-spline of degree $q$ we can distinguish $(2q-1)$ different cases, i.e. if $q\leq i_\ell\leq J-q$, the 1 dimensional splines are just translations of each other. Since the $d$-dimensional B-splines are defined as a tensor product of $d$ one-dimensional B-splines the number of distinct cases is $(2q-1)^d$. 

Define the translation map $T_{i}: \mathbb{R}^d\mapsto \mathbb{R}^d$, $i\in\{1,...,J\}^d$, to be the map given by $T_{i}(x) = (\frac{x_1 - i_1+ 1}{J}, \dots, \frac{x_d - i_d + 1}{J})$, then $\det T_{i} = J^{-d}$. This maps into the same space of polynomials regardless of $i$. This means
  \begin{align*}
    \int_{I_{i}} \left( \sum_{j \in C_{i}} \theta_j B_j(x) \right)^2 \dd x = J^{-d} \int_{[0,1]^d} \left( \sum_{j \in C_{i}} \theta_j B_j(T_{i}(x)) \right)^2 \dd x.
  \end{align*}

  We argue per case now. On each of these hypercubes $I_{i}$, $i\in\{1,...,J\}^d$, the splines are locally polynomials. 
  Then the inverse of $T_{i}$ defines a linear map between the polynomials spanned by the splines and the space of polynomials $P$ of order $q$. 
  The splines define $q^d$ basis functions on our cubes $I_i$. 
  Observe that each of the linear maps $T_{i}$ map the B-spline basis functions $B_j$, $j\in\{1,...,J\}^d$ to the same space of polynomials. 
  Note that by~\cite[Theorem 4.5]{schumakerSplineFunctionsBasic2007} and the rescaling property the $1$ dimensional splines restricted to the interval $[\frac{i}{j}, \frac{i+1}{J}]$ are linearly-independent. By tensorisation it follows that the $B$-spline basis restricted to the hypercube $I_i$ provides a linearly indepedent polynomial basis, hence $\sum_{j \in C_{i}}\theta_j^2$ defines a squared norm of the functions $x\mapsto\sum_{j \in C_{i}} \theta_j B_j(x)$, $x\in I_i$. Since in finite dimensional real vector spaces all norms are equivalent this results in
  \begin{align*}
    \sum_{j \in C_{i}} \theta_j^2 &\asymp \int_{[0,1]^d} \left( \sum_{j \in C_{i}} \theta_j B_j(T_{i}(x)) \right)^2 \dd x\\
&= J^{d} \int_{I_i} \left( \sum_{j \in C_{i}} \theta_j B_{j}(x)\right)^2 \dd x. 
  \end{align*}

In view of the argument  above, there are at most $(2q-1)^d$ different groups of hypercubes, hence the above result can be extended to the whole interval $[0,1]^d$ as well (by taking the worst case scenario constants in the above inequality out of the finitely many one), i.e. 
\begin{align*}
      \sum_{i\in\{1,....,J\}^d} \sum_{j \in C_{i}} \theta_j^2 &\asymp J^d \sum_{i\in\{1,....,J\}^d} \int_{I_{i}} \left(\sum_{j \in C_{i}} \theta_j B_j(x) \right)^2 \dd x\\
&=J^d\int_{[0,1]^d} \left(\sum_{j \in \{1,....,J\}^d} \theta_j B_j(x) \right)^2 \dd x.
 \end{align*}
 Since every $j$ on the left hand side occurs at most $(2q-1)^d$ many times in the sum, this leads us to
  \[
      \| \theta \|^2_2 \asymp J^d \| \theta^T B \|_2^2,
  \]
for some universal constants concluding the proof of our statement.
\end{proof}



\section{Concentration rates and uncertainty quantification of the posterior distribution} \label{sec:general:results}

In this section we provide posterior contraction rates and lower bounds for the radius of the credible balls under general conditions. These results are then applied for the Empirical Bayes Deep Neural Network method in Section \ref{sec:proofs:main}.

\subsection{Coverage theorem - general form}\label{sec:gen:them}

In this section we first provide a general theorem on the size of credible sets based on sieve type of priors. This result can be used beyond the nonparametric regression model and is basically the adaptation of Lemma 4 of \cite{rousseauAsymptoticFrequentistCoverage2020} to the non-adaptive setting with fixed sieve dimension $k$, not chosen by the empirical Bayes method as in \cite{rousseauAsymptoticFrequentistCoverage2020}. This theorem is of separate interest, as it can be used for instance for extending our results to other models, including nonparametric classification.

We start by introducing the framework under which our results hold. We consider a general statistical model, i.e. we assume that our data $\mathbb{D}_n$ is generated from a distribution $\PP_{f_0}$ indexed by an unknown functional parameter of interest $f_0$ belonging to some class of functions $\mathcal{F}$. Let us consider $k=k_n$ (not necessarily orthogonal) basis functions $\phi_1,...,\phi_{k}\in\mathcal{F}$ and use the notation $f_{\theta}(x)=\sum_{i=1}^k \theta_i \phi_i(x)$. Then we define the class $\Theta_k=\{\sum_{i=1}^k \theta_i \phi_i, \theta_i\in\mathbb{R}, i=1,...,k\}\subset\mathcal{F}$ and equivalently we also refer to the elements of this class using the coefficients $(\theta_1,....,\theta_k)$. We note that $f_0$ doesn't necessarily belong to the sub-class $\Theta_k$.  


Furthermore, let us consider a pseudometric $d_n: \mathcal{F}\times \mathcal{F}\mapsto\mathbb{R}$ and take $\theta^o=\arg\inf_{\theta\in \Theta_k} d_n(f_\theta,f_0)$, i.e. the projection of $f_0$ to the space $\Theta_k$ is $f_{\theta^o}$ with $\theta^o=(\theta^o_1,...,\theta^o_k)^T\in \Theta_k$ denoting the corresponding coefficient vector. Let us also consider a metric $d:\, \Theta_k\times \Theta_k\mapsto \mathbb{R}$ on the $k$-dimensional parameter space $\Theta_k$. Finally, we introduce the notation $B_k(\bar{\theta},\eps,d)=\{\theta\in \Theta_k:\, d(\theta,\bar\theta)\leq \eps\}$ for the $\eps$-radius $d$-ball in $\Theta_k$ centered at $\bar\theta\in\Theta_k$ and $B(\tilde{f},\eps,d_n)=\{f\in\mathcal{F}:\, d_n(\tilde{f},f)\leq \eps\}$ for the $\eps$-radius $d_n$-ball centered at $\tilde{f}\in\mathcal{F}$.


The next theorem provides lower bound for the radius $r_{n,\alpha}$ of the credible balls 
\[
  B(f_{\hat\theta},  \delta_\eps \eps_n,d_n )=\{f\in\mathcal{F}:\, d_n(f_{\theta},f_{\hat\theta})\leq r_{n,\alpha}\}
\]
 centered around an estimator $f_{\hat\theta}$. The radius $r_{n,\alpha}$ is defined as
\begin{align*}
\Pi_{k}\Big(\theta\in \Theta_k:\, f_\theta\in B(f_{\hat\theta},  r_{n,\alpha},d_n ) | \mathbb{D}_n \Big)=1-\alpha.
\end{align*}
Before stating the theorem we introduce some assumptions.

\begin{enumerate}[label=\textbf{A\arabic*}]
  \item \label{Ass:Centering_point} The centering point $f_{\hat\theta} \in \mathcal{F}$ satisfies that for all $\epsilon>0$ there exists $M_\epsilon>0$
  \begin{equation*}
    \sup_{f_0 \in  \mathcal{F}} \PP_{f_0} \left( d_n( f_{\theta^o} , f_{\hat\theta})\leq  M_\epsilon \sqrt{k/n}\right) \geq 1-\eps. 
  \end{equation*}
\item  \label{Ass:metrics} Assume that there exists $C_m>0$ such that for all $\theta,\theta'\in\Theta_k$
$$ C_m^{-1}d(\theta,\theta')\leq d_n(f_{\theta},f_{\theta'})\leq C_m d(\theta,\theta'). $$
 
  \item \label{Ass:UQ_assumptions} Assume that for all $M, \epsilon>0$ there exist constants $c_1, c_2, c_3, c_4, \delta_0, B_\epsilon > 0$ and $r \geq 2$ such that the following conditions hold
  \begin{enumerate}[label=\textbf{A\arabic{enumi}.\roman*}]
    \item \label{Ass:Metric_balls_contained_in_KL_balls}
    \begin{equation*}
      \begin{split}
        B_{k}(\theta^o,\sqrt{k/n},d ) \subset S_n(k, c_{1}, c_2,r),
      \end{split}
    \end{equation*}
    where $S_n(k,  c_{1}, c_2, r) =  $
    \[
      \hspace{-1cm}
      \Big\{
        \theta \in \Theta_{k}:\,
        \EE_{f_0} \log\frac{p_{\theta^o}}{p_{\theta}}  \leq c_{1} k,
        \EE_{f_0}\Big(\log\frac{p_{\theta^o}}{p_{\theta}}-\EE_{f_0}\log\frac{p_{\theta^o}}{p_{\theta}}\Big)^r\leq  c_2 k^{r/2}
       \Big\}.
    \]
  \item \label{Ass:Likelihood_ratio_control}Let $\bar{B}_k=\Theta_{k} \cap B( f_{\theta^o}, M_\epsilon \sqrt{k/n},d_n)$. Then for every $f_0\in  \mathcal{F}$
    \begin{equation*}
      \PP_{f_0}\Big(\sup_{f_\theta\in \bar{B}_k }\ell_n(\theta) - \ell_n(\theta^o)\leq  B_\eps k\Big)\geq 1-\eps,
    \end{equation*}
where $\ell_n(\theta)$ denotes the log-likelihood corresponding to the functional parameter $f_{\theta}\in\Theta_k$.
  \item \label{Ass:Small_ball_control}For every $\delta_0$ small enough
    \begin{equation*}
     \frac{   \sup_{\theta\in  \bar{B}_k } \Pi_{k}\big( B_k(\theta, \delta_{0}\sqrt{k/n},d)\big)}{\Pi_{k}\big( B_{k}(\theta^o, \sqrt{k/n},d )\big)} \leq c_{4} e^{ c_3 k \log (\delta_0) }.
    \end{equation*}

  \end{enumerate}
\end{enumerate}

\begin{thm}\label{thm:general:coverage}
Assume that conditions \ref{Ass:Centering_point}- \ref{Ass:UQ_assumptions} hold. Then for every $\eps>0$ there exists a small enough $\delta_\eps>0$ such that
 \begin{equation*}
 \sup_{f_0\in \mathcal{F}} \EE_{f_0} \Pi_{k}\left(\theta \in \Theta_{k}:\,d_n(f_\theta,f_{\hat\theta})\leq \delta_\eps  \sqrt{k/n}|\mathbb{D}_n \right) \leq \eps.
 \end{equation*}
\end{thm}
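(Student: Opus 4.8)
The plan is to view the posterior mass as a ratio of two integrals against the prior and to bound the numerator from above and the denominator from below. Writing $Z_n(\theta)=\ell_n(\theta)-\ell_n(\theta^o)$ and abbreviating $A_n=\{\theta\in\Theta_k:\ d_n(f_\theta,f_{\hat\theta})\leq \delta_\eps\sqrt{k/n}\}$, we have
\[
\Pi_k(A_n\mid\mathbb{D}_n)=\frac{\int_{A_n}e^{Z_n(\theta)}\,d\Pi_k(\theta)}{\int_{\Theta_k}e^{Z_n(\theta)}\,d\Pi_k(\theta)},
\]
and the whole argument reduces to controlling these two integrals on a high-probability event and then taking $\delta_\eps$ small.

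First I would handle the denominator (the evidence), which is the analytic heart of the proof. Restricting the integral to the Kullback--Leibler type neighbourhood $S_n=S_n(k,c_1,c_2,r)$ and applying Jensen's inequality to the prior renormalised on $S_n$ gives $\int_{\Theta_k}e^{Z_n}\,d\Pi_k\geq \Pi_k(S_n)\,e^{-W_n}$, where $W_n=\Pi_k(S_n)^{-1}\int_{S_n}\big(\ell_n(\theta^o)-\ell_n(\theta)\big)\,d\Pi_k(\theta)$. Fubini together with the first-moment bound in \ref{Ass:Metric_balls_contained_in_KL_balls} yields $\EE_{f_0}W_n\leq c_1 k$, while Jensen inside the integral combined with the $r$-th central-moment bound gives $\EE_{f_0}|W_n-\EE_{f_0}W_n|^r\leq c_2 k^{r/2}$. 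Markov's inequality then shows $W_n\leq (c_1+1)k$ on an event of probability at least $1-c_2 k^{-r/2}$, so on this event
\[
\int_{\Theta_k}e^{Z_n(\theta)}\,d\Pi_k(\theta)\;\geq\;\Pi_k\big(B_k(\theta^o,\sqrt{k/n},d)\big)\,e^{-(c_1+1)k},
\]
where the inclusion $B_k(\theta^o,\sqrt{k/n},d)\subseteq S_n$ from \ref{Ass:Metric_balls_contained_in_KL_balls} was used.

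Next I would bound the numerator. On the event of \ref{Ass:Centering_point} one has $d_n(f_{\theta^o},f_{\hat\theta})\leq M_\eps\sqrt{k/n}$; taking $\delta_\eps\leq M_\eps$, the triangle inequality places $A_n$ inside $\bar B_k$ (after harmlessly enlarging the constant in \ref{Ass:Likelihood_ratio_control}), so on the event of \ref{Ass:Likelihood_ratio_control} the integrand is at most $e^{B_\eps k}$ on $A_n$ and $\int_{A_n}e^{Z_n}\,d\Pi_k\leq e^{B_\eps k}\Pi_k(A_n)$. The metric equivalence \ref{Ass:metrics} gives $A_n\subseteq B_k(\hat\theta,C_m\delta_\eps\sqrt{k/n},d)$, and since $\hat\theta\in\bar B_k$ on the same event, the small-ball control \ref{Ass:Small_ball_control} with $\delta_0=C_m\delta_\eps$ yields
\[
\Pi_k(A_n)\;\leq\;\sup_{\theta\in\bar B_k}\Pi_k\big(B_k(\theta,\delta_0\sqrt{k/n},d)\big)\;\leq\;c_4\,e^{c_3 k\log\delta_0}\,\Pi_k\big(B_k(\theta^o,\sqrt{k/n},d)\big).
\]

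Finally I would combine the pieces. On the intersection of the three high-probability events the factors $\Pi_k(B_k(\theta^o,\sqrt{k/n},d))$ cancel and
\[
\Pi_k(A_n\mid\mathbb{D}_n)\;\leq\;c_4\exp\!\Big(k\big[B_\eps+(c_1+1)+c_3\log(C_m\delta_\eps)\big]\Big).
\]
Because $c_3>0$, choosing $\delta_\eps$ small enough makes the bracket strictly negative, so this bound tends to $0$ as $k=k_n\to\infty$. Splitting $\EE_{f_0}\Pi_k(A_n\mid\mathbb{D}_n)$ over the good event and its complement (on which I bound the posterior mass by $1$) and calibrating the levels in \ref{Ass:Centering_point} and \ref{Ass:Likelihood_ratio_control} to $\eps/4$ each, together with $c_2 k^{-r/2}\to0$ and the exponentially small deterministic term, gives the claimed bound $\leq\eps$ for $n$ large. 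I expect the denominator lower bound of the first step --- converting the moment conditions defining $S_n$ into a high-probability lower bound on the evidence via Jensen, Fubini and Markov --- to be the main obstacle; the remaining steps are essentially bookkeeping organised around the metric equivalence and the small-ball ratio.
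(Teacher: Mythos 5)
Your proposal is correct and follows essentially the same route as the paper: the same ratio decomposition, the same evidence lower bound via the Kullback--Leibler neighbourhood $S_n$ (you re-derive by Jensen/Fubini/Markov what the paper imports as Lemma 8.37 of Ghosal--van der Vaart), the same use of \ref{Ass:Centering_point} and \ref{Ass:Likelihood_ratio_control} to bound the numerator integrand, and the same conversion via \ref{Ass:metrics} and \ref{Ass:Small_ball_control} with $\delta_0=C_m\delta_\eps$ to make the exponent negative. Only the bookkeeping of the exceptional-event probabilities differs, and immaterially so.
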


\begin{proof}
First note that since $\Pi_k$ is supported on $\Theta_k$,
  \begin{equation}\label{eq: UB:small:ball}
  \begin{split}
 \Pi_{k}\left(\theta:\, d_n(f_\theta,f_{\hat\theta})\leq \delta_\eps\sqrt{k/n}|\mathbb{D}_n  \right)&=    \frac{\int_{B(f_{\hat\theta},  \delta_\eps \sqrt{k/n},d_n)\cap \Theta_k}e^{\ell_n(\theta)-\ell_n(\theta^o)} d\Pi_{k}(\theta) }{\int_{\Theta_{k}}e^{\ell_n(\theta)-\ell_n(\theta^o)} d\Pi_{k}(\theta)}.
   \end{split}
  \end{equation}

Next let us introduce the notations
\begin{align}  
\Omega_{n}(C)=\left\{ e^{Ck}\frac{\int_{\Theta_{k}} e^{\ell_n(\theta) - \ell_n(\theta^o)}d\Pi_{k}(\theta)}{\Pi_{k}\big( B_{k}(\theta^o,\sqrt{k/n},d)\big)}\geq 1\right\},\\
\Gamma_n(B) = \sup_{B(f_{\hat\theta},  \delta_\eps \sqrt{k/n},d_n)\cap \Theta_k }\ell_n(\theta) - \ell_n(\theta^o )<Bk.
\end{align}
Note that in view of Assumptions \labelcref{Ass:Likelihood_ratio_control} and \ref{Ass:Centering_point} we have that $\inf_{f_0} \PP_{f_0}(\Gamma_n(B_{\eps}) ) \geq 1 - 2\epsilon$ for some large enough constant $B_\eps>0$ and in view of Assumption \labelcref{Ass:Metric_balls_contained_in_KL_balls} by using the standard technique for lower bound for the likelihood ratio (\cite[Lemma 8.37]{ghosalFundamentalsNonparametricBayesian2017}) we have  with $\PP_{f_0}$-probability bounded from below by $ 1-\eps$ that there exists $c_0>0$ such that
\begin{align*}
\int_{\Theta_k} e^{\ell_n(\theta) - \ell_n(\theta^o)}d\Pi_{k}(\theta) &\geq e^{-(c_0+1/\sqrt{\epsilon})k}\Pi_{k}\big(S_n(k,c_{1},c_2,r)\big)\nonumber\\
&\geq e^{-(c_0+1/\sqrt{\epsilon}) k_n }\Pi_{k}\big(B_{k}(\theta^o,\sqrt{k/n},d)\big),
  \end{align*}
hence $\PP_{f_0}( \Omega_n(c_0+1/\sqrt{\epsilon}) )\geq 1-\eps$.

Therefore, in view of assumption \ref{Ass:metrics}, the right hand side of \eqref{eq: UB:small:ball} is bounded from above on $A_n=\Omega_n(c_0+1/\sqrt{\epsilon})\cap \Gamma_n(B_{\eps})\cap \{d_n( f_{\theta^o} , f_{\hat\theta})\leq   M_\epsilon \sqrt{k/n}\} $ by
\begin{align*}
&e^{(B_{\eps}+c_{0}+1/\sqrt{\epsilon})k} \frac{ \Pi_{k}\left( \Theta_{k}\cap B(f_{\hat\theta},  \delta_\eps \sqrt{k/n},d_n )\right) }{ \Pi_{k}\left(B_{k}(\theta^o,   \sqrt{k/n},d) \right)}\\
&\qquad\qquad\leq e^{(c_{0}+B_{\eps}+1/\sqrt{\epsilon})k} \frac{ \Pi_{k}\big( B_k( \hat\theta,  C_m\delta_\eps \sqrt{k/n},d )\big) }{ \Pi_{k}\left(B_{k}(\theta^o,   \sqrt{k/n},d) \right)}\\
&\qquad\qquad\leq C  e^{\big(c_{0}+B_{\eps}+1/\sqrt{\epsilon}+c_{3}\log(C_m \delta_{\eps}) \big)  k}\leq   \eps,
\end{align*}
for small enough choice of $\delta_\eps>0$,  where the last line follows from assumption \labelcref{Ass:Small_ball_control} (with $\delta_{0}=C_m\delta_{\eps}$). Furthermore, note that
$$\PP_{f_0}(A_n^c)\leq \PP_{f_0}\big(\Omega_n(c_0+1/\sqrt{\epsilon})\big)
+\PP_{f_0}\big(\Gamma_n(B_{\eps}) \big)+ \PP_{f_0}\big( d_n( f_{\theta^o} , f_{\hat \theta})\geq M_\epsilon \sqrt{k/n} \big)\leq 4\eps,$$
where for the last term we used Assumption \ref{Ass:Centering_point}. Hence the $\EE_{f_0}$-expected value of the first term on the right hand side of \eqref{eq: UB:small:ball} is bounded from above by $5\eps$. 
\end{proof}

\subsection{Coverage in nonparametric regression}\label{sec:thm:regression}
We apply Theorem \ref{thm:general:coverage} in context of the uniform random design nonparametric regression model, i.e. we observe pairs of random variables $\mathbb{D}_n=\{(X_1,Y_1),..., (X_n,Y_n)\}$ satisfying that
\begin{align}
Y_i=f_0(X_i)+\eps_i,\qquad X_i\stackrel{iid}{\sim} \text{Unif}([0,1]^d),\quad \eps_i\stackrel{iid}{\sim}N(0,1),\quad i=1,...,n,\label{def:random:design:regression}
\end{align}
for some unknown functional parameter $f_0\in\mathcal{F}=L_2([0,1]^d,M)$. Let us denote by $X$ the collection of design points, i.e. $X=(X_1,...,X_n)$.

Let us consider $k=k_n$ (not necessarily orthogonal) basis functions $\phi_1,...,\phi_{k}\in\mathcal{F}$ and use the notation $f_{\theta}(x)=\sum_{i=1}^k \theta_i \phi_i(x)$. We denote by $\Phi_{n,k}$ the empirical basis matrix consisting the  basis functions $\phi_1,...,\phi_{k}$ evaluated at the design points $X_1,...,X_n$, i.e.
\begin{equation*}
\Phi_{n,k}= 
\begin{pmatrix}
\phi_1(X_1) & \phi_2(X_1) & \cdots & \phi_{k}(X_1) \\
\phi_1(X_2) & \phi_2(X_2) & \cdots & \phi_{k}(X_2) \\
\vdots  & \vdots  & \ddots & \vdots  \\
\phi_1(X_n) & \phi_2(X_n) & \cdots & \phi_{k}(X_n)
\end{pmatrix}.
\end{equation*}
Furthermore, let us denote the Gram matrix of the basis functions $\phi_1,...,\phi_k$ with respect to the $L_2$ inner product $\langle f,g \rangle=\int_{[0,1]^d} f(x) g(x) dx$ by
\begin{equation}\label{def:gram:mtx}
\Sigma_k=
\begin{pmatrix}
\langle \phi_1,\phi_1\rangle & \langle \phi_1,\phi_2\rangle & \cdots & \langle \phi_1,\phi_k\rangle \\
\langle \phi_2,\phi_1\rangle &\langle \phi_2,\phi_2\rangle  & \cdots & \langle \phi_2,\phi_k\rangle \\
\vdots  & \vdots  & \ddots & \vdots  \\
\langle \phi_k,\phi_1\rangle&\langle \phi_k,\phi_2\rangle & \cdots &\langle \phi_k,\phi_k\rangle
\end{pmatrix}.
\end{equation}
Finally, we need to impose the following near orthogonality assumption on the basis functions $\phi_1,...,\phi_k$.
\begin{enumerate}[label=\textbf{B\arabic*}]
  \item \label{Ass:Equivalence_of_metrics} Assume that there exists a constant $c_m \geq 1$ such that
$$c_m^{-1} I_k\leq  \Sigma_k\leq c_m I_k.$$
\end{enumerate}

In our analysis we consider a prior $\Pi_k$ supported on functions of the form $f_{\theta}=\sum_{j=1}^{k}\theta_j \phi_j$. We take priors of the product form, i.e.
\[
  d\Pi_k(\theta)=\prod_{j=1}^k g(\theta_j)d\theta,
\]
for a one dimensional density $g$, satisfying for every $M'> 0$ that there exists constants $\underline{c}, \overline{c} > 0$ such that
\begin{align}
   \underline{c} \leq g(x) \leq \overline{c},\qquad x\in[-M',M']\label{assump:prior}
\end{align}

\begin{lem}\label{thm:cov:reg}
  Consider the nonparametric regression model \eqref{def:random:design:regression} and a prior $\Pi_{k}$ satisfying assumption \eqref{assump:prior}. We assume that the Gram matrix $\Sigma_k$ given in \eqref{def:gram:mtx}, consisting bounded basis functions $\phi_j$, $j=1,...,k$, satisfies \ref{Ass:Equivalence_of_metrics} and that $\sum_{j=1}^k\phi_j(x)^2\leq Ck$, for all $x\in[0,1]^d$.  Furthermore, assume that the centering point of the credible set satisfies assumption \textbf{A1}. Then for every $\eps>0$ there exists a small enough $\delta_\eps>0$ such that
 \begin{equation*}
   \sup_{f_0\in L(([0,1]^d,M))} \EE_{f_0} \Pi_{k}\left(\theta\in\mathbb{R}^k:\,\|f_{\theta}-f_{\hat\theta}\|_2\leq \delta_\eps  \sqrt{k/n}|\mathbb{D}_n \right) \leq \eps.
 \end{equation*}
\end{lem}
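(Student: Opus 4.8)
The plan is to derive Lemma \ref{thm:cov:reg} as an application of the general coverage result, Theorem \ref{thm:general:coverage}, so the entire task reduces to checking that its hypotheses \ref{Ass:Centering_point}--\ref{Ass:UQ_assumptions} hold in the regression model \eqref{def:random:design:regression} for the choices $d_n(f,g)=\|f-g\|_2$ (the $L_2([0,1]^d)$ functional norm) and $d(\theta,\theta')=\|\theta-\theta'\|_2$ (the Euclidean metric on the coefficients). Assumption \ref{Ass:Centering_point} is exactly the hypothesis imposed on the centering point. For \ref{Ass:metrics} I would use the identity $\|f_\theta-f_{\theta'}\|_2^2=(\theta-\theta')^T\Sigma_k(\theta-\theta')$ together with the near-orthogonality assumption \ref{Ass:Equivalence_of_metrics}, which immediately gives $c_m^{-1/2}\|\theta-\theta'\|_2\le\|f_\theta-f_{\theta'}\|_2\le c_m^{1/2}\|\theta-\theta'\|_2$, i.e. \ref{Ass:metrics} with $C_m=c_m^{1/2}$. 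The work therefore concentrates on the three parts of \ref{Ass:UQ_assumptions}.

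The easiest of the three is the small-ball ratio \ref{Ass:Small_ball_control}. Under $d$ the balls $B_k(\theta,\rho,d)$ are ordinary Euclidean balls in $\RR^k$, and the prior is of product form with density $g$ satisfying \eqref{assump:prior}. Since the relevant centers $\theta\in\bar B_k$ lie within $O(\sqrt{k/n})=o(1)$ of $\theta^o$, and $\|\theta^o\|_2\le c_m^{1/2}\|f_{\theta^o}\|_2\le c_m^{1/2}M$ because the $L_2$-projection is a contraction, all these balls sit inside a fixed compact set on which $\underline c\le g\le\overline c$. Bounding $g$ above on the numerator ball of radius $\delta_0\sqrt{k/n}$ and below on the denominator ball of radius $\sqrt{k/n}$, the Lebesgue volumes contribute a factor $\delta_0^k$, so the ratio is at most $(\overline c/\underline c)^k\delta_0^k=e^{k\log\delta_0+k\log(\overline c/\underline c)}$; taking $\delta_0$ small makes $\log\delta_0$ dominate, giving \ref{Ass:Small_ball_control} with, say, $c_3=1/2$ and $c_4=1$.

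For the likelihood-based conditions I would first record the exact log-likelihood ratio. Writing $h=f_0-f_{\theta^o}$, $u=f_\theta-f_{\theta^o}$ and $w=\theta-\theta^o$, a direct expansion gives $\log(p_{\theta^o}/p_\theta)=-\sum_i(h(X_i)+\eps_i)u(X_i)+\tfrac12\sum_i u(X_i)^2$. Because $f_{\theta^o}$ is the $L_2$-projection of $f_0$ the residual is orthogonal to $\Theta_k$, so $\langle h,u\rangle=0$ and the mean collapses to $\EE_{f_0}\log(p_{\theta^o}/p_\theta)=\tfrac n2\|u\|_2^2\le\tfrac{c_m}{2}k$ on $B_k(\theta^o,\sqrt{k/n},d)$, which is the first half of \ref{Ass:Metric_balls_contained_in_KL_balls}. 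For \ref{Ass:Likelihood_ratio_control} I would write $\ell_n(\theta)-\ell_n(\theta^o)=w^T\xi-\tfrac12\sum_i u(X_i)^2$ with $\xi=\sum_i(h(X_i)+\eps_i)\phi(X_i)$, drop the nonnegative quadratic term, and apply the generalized Cauchy--Schwarz inequality over $\bar B_k$, where $\sqrt{w^T\Sigma_k w}\le M_\eps\sqrt{k/n}$; this yields the deterministic (hence uniform, with no chaining needed) bound $\sup_{\bar B_k}(\ell_n(\theta)-\ell_n(\theta^o))\le M_\eps\sqrt{k/n}\,\sqrt{\xi^T\Sigma_k^{-1}\xi}\le M_\eps c_m^{1/2}\sqrt{k/n}\,\|\xi\|_2$. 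The orthogonality $\langle h,\phi_j\rangle=0$ kills the cross terms in $\EE\|\xi\|_2^2$, leaving $\EE\|\xi\|_2^2=n\sum_j(\EE[h^2\phi_j^2]+\|\phi_j\|_2^2)\le n(Ck\|h\|_2^2+c_m k)\lesssim nk$ by the pointwise bound $\sum_j\phi_j^2\le Ck$; Markov then gives \ref{Ass:Likelihood_ratio_control} for a large constant $B_\eps$.

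The main obstacle is the $r$-th central moment bound in \ref{Ass:Metric_balls_contained_in_KL_balls}. Centering the display above and splitting $\log(p_{\theta^o}/p_\theta)-\EE_{f_0}\log(p_{\theta^o}/p_\theta)$ into the Gaussian part $-\sum_i\eps_i u(X_i)$, the design drift $-\sum_i h(X_i)u(X_i)$, and the quadratic fluctuation $\tfrac12\sum_i(u(X_i)^2-\|u\|_2^2)$, I would bound each $r$-th moment by Rosenthal / Marcinkiewicz--Zygmund inequalities. Conditioning on the design, the Gaussian part is centered normal with variance $\sum_i u(X_i)^2$ concentrating at $n\|u\|_2^2\le c_m k$, so it contributes the leading $O(k^{r/2})$; the quadratic fluctuation is lower order using $\|u\|_\infty\le\sqrt{Ck}\,\|w\|_2=O(k/\sqrt n)$. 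The delicate term is the drift: its variance is $n\EE[h^2u^2]\le n\|u\|_\infty^2\|h\|_2^2=O(k^2\|h\|_2^2)$, which is only $O(k)$ once one invokes the smallness of the projection residual $\|h\|_2=\|f_0-f_{\theta^o}\|_2$ (of order $\eps_n$ for the spline/DNN basis of Lemma \ref{Lem:BasisProperties} when $\beta>d/2$, so that $k\|h\|_2^2\lesssim k\eps_n^2\to0$). Getting this term, together with the pointwise-boundedness bookkeeping for $h$ and $u$, to land at $O(k^{r/2})$ uniformly over the ball is where the real effort lies; with it in hand, \ref{Ass:Metric_balls_contained_in_KL_balls} holds and Theorem \ref{thm:general:coverage} delivers the claim.
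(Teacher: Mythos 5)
Your overall strategy---reduce to Theorem \ref{thm:general:coverage} and verify \ref{Ass:Centering_point}--\ref{Ass:UQ_assumptions}---is the same as the paper's, and your treatments of \ref{Ass:metrics}, \ref{Ass:Likelihood_ratio_control} and \ref{Ass:Small_ball_control} are essentially sound. The consequential difference is your choice $d_n=\|\cdot\|_2$ (population norm), which forces $\theta^o$ to be the population $L_2$-projection of $f_0$. The paper instead works conditionally on the design: it takes $d_n=\|\cdot\|_n$ (the empirical semi-norm) and $\theta^o$ the $\|\cdot\|_n$-projection, so that the cross term $\sum_i\bigl(f_0(X_i)-f_{\theta^o}(X_i)\bigr)\bigl(f_\theta(X_i)-f_{\theta^o}(X_i)\bigr)$ vanishes \emph{identically}; the conditional log-likelihood ratio is then an explicit Gaussian quadratic and \ref{Ass:Metric_balls_contained_in_KL_balls} follows with $r=2$ in two lines. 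The passage from $\|\cdot\|_n$ back to $\|\cdot\|_2$ is made once, at the end, via concentration of the empirical Gram matrix $\Sigma_{n,k}$ around $\Sigma_k$ (Rudelson's inequality) on an event of probability tending to one.

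The gap in your route is exactly the term you flag as the main obstacle. With the population projection, the centered quantity $\log(p_{\theta^o}/p_\theta)-\EE_{f_0}\log(p_{\theta^o}/p_\theta)$ contains the drift $\sum_i h(X_i)u(X_i)$ with $h=f_0-f_{\theta^o}$, whose variance is $n\,\mathrm{Var}\bigl(h(X)u(X)\bigr)\le n\,\EE[h^2u^2]$. Under the lemma's stated hypotheses the available bounds are $\|u\|_\infty^2\le Ck\|w\|_2^2\le Ck^2/n$ (giving variance $\lesssim k^2\|h\|_2^2$) or $\|h\|_\infty\lesssim\sqrt{k}$ (giving variance $\lesssim kn\|u\|_2^2\lesssim k^2$); either way you land at $O(k^2)$ rather than the required $O(k^{r/2})=O(k)$. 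Your proposed rescue---invoking $\|h\|_2\lesssim\eps_n$ so that $k\|h\|_2^2\to0$---imports a smoothness/approximation hypothesis ($f_0\in S^\beta_d(M)$ with $\beta>d/2$ and a basis realizing the minimax bias) that is \emph{not} among the assumptions of Lemma \ref{thm:cov:reg}: the lemma is stated for an arbitrary bounded $f_0$ and an arbitrary near-orthogonal bounded basis, and is applied as such. As written, the verification of \ref{Ass:Metric_balls_contained_in_KL_balls} therefore does not go through. The clean fix is the paper's device: condition on $X$, define $\theta^o$ as the $\|\cdot\|_n$-projection so the drift term is exactly zero, and recover the $\|\cdot\|_2$ statement at the end from $\|\Sigma_{n,k}-\Sigma_k\|_2=o_{\PP_{f_0}}(1)$.
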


\begin{proof}
  We show below that the conditions of Theorem \ref{thm:general:coverage} hold in this model for the conditional probability given the design points $\PP_{f_0}^{|X}(\cdot)=\PP_{f_0}(\cdot|X)$, on an event $A_n\subset \mathcal{X}^n$, where $\mathcal{X}=[0,1]^d$, satisfying $\PP_{f_0}(A_n^c)\leq \eps$, taking $d_n$ to be the empirical $L_2$ semi-metric, i.e. $d_n(f,g)^2=\|f-g\|_n^2=\sum_{i=1}^n \big(f(X_i)-g(X_i)\big)^2$ and $d$ the $\ell_2$-metric in $\Theta_k=\mathbb{R}^k$ i.e. $d(\theta,\theta')= \|\theta-\theta'\|_2$. Hence in view of Theorem \ref{thm:general:coverage}, on the event $A_n$ for every $\eps>0$ there exists a small enough $\delta_\eps>0$ such that
\begin{align*}
 \sup_{f_0\in \mathcal{F}} \EE_{f_0}^{|X} \Pi_{k}\left(\theta\in\mathbb{R}^k:\,\|f_\theta-f_{\hat\theta}\|_n\leq 2\delta_\eps  \sqrt{k/n}|\mathbb{D}_n \right) \leq \eps.
\end{align*}
Next note that in view of assertion \eqref{eq:reg:metrics}, see below, we get on an event $B_n$, with $\PP_{f_0}(B_n^c)\leq \eps$, that
\begin{align*}
\|f_\theta-f_{\hat\theta}\|_n/2\leq  \|f_{\theta}-f_{\hat\theta}\|_2,
\end{align*}
resulting in
\begin{align*}
 &\sup_{f_0\in \mathcal{F}} \EE_{f_0} \Pi_{k}\left(\theta:\,\|f_\theta-f_{\hat\theta}\|_2\leq \delta_\eps  \sqrt{k/n}|\mathbb{D}_n \right)\\
&\qquad\qquad\leq  \sup_{f_0\in \mathcal{F}} \EE_{f_0}\EE_{f_0}^{|X}\Big( \Pi_{k}\big(\theta:\,\|f_\theta-f_{\hat\theta}\|_n\leq 2\delta_\eps \sqrt{k/n}|\mathbb{D}_n\big) \Big)\\
& \qquad\qquad\leq \sup_{f_0\in \mathcal{F}} \PP_{f_0}(A_n^c)+ \PP_{f_0}(B_n^c)+\eps\leq 3\eps.
\end{align*}

It remained to prove that the conditions of  Theorem \ref{thm:general:coverage} hold.\\

\noindent\textbf{Condition \labelcref{Ass:Centering_point}.} Follows by the choice of the centering point.

\noindent\textbf{Condition \labelcref{Ass:metrics}.} 
First note that $\Sigma_{n,k}=n^{-1}\Phi_{n,k}^T\Phi_{n,k}$ has mean $\Sigma_{k}$. Then by the modified version of Rudelson's inequality \cite{rudelsonRandomVectorsIsotropic1999} we get that
\begin{align*}
\EE_{f_0}\| \Sigma_{n,k}-\Sigma_k \|_2\leq C \sqrt{\frac{\log k}{n}}\EE_{f_0}(\|\boldsymbol\phi (X_1) \|_2^{\log n})^{1/\log n},
\end{align*}
with $\boldsymbol\phi (X_1)=\big(\phi_1(X_1),...,\phi_k(X_1)\big)^T$. Note that by the boundedness assumption $\sum_{j=1}^k\phi_j(x)^2\leq Ck$, $x\in[0,1]^d$, the right hand side of the preceding display is bounded from above by constant times $\sqrt{k\log (k)/n}=o(1)$ on an event $B_n$ with $\PP_{f_0}(B_n)$ tending to one. Therefore,
\begin{align*}
 \Big|\|f_{\theta}\|_2^2-\|f_{\theta}\|_n^2\Big|&= \Big| \theta^{T} (\Sigma_k-\Sigma_{n,k})\theta\Big|
\leq \|\Sigma_k-\Sigma_{n,k}\|_2 \|\theta\|_2^2=o_{\PP_{f_0}}(\|\theta\|_2^2). 
\end{align*}
Furthermore, in view of Assumption \textbf{B1}
\[
  c_m^{-1} \|\theta\|_2^2 \leq  \|f_{\theta}\|_2^2= \theta^T\Sigma_k\theta\leq c_m \|\theta\|_2^2,\qquad\text{for all $\theta\in\mathbb{R}^k$},
\]
which in turn implies that on $B_n$
\begin{align}
(2c_m)^{-1} \|\theta\|_2^2\leq  \|f_{\theta}\|_2^2/2 \leq \|f_{\theta}\|_n^2\leq 2 \|f_{\theta}\|_2^2\leq 2c_m \|\theta\|_2^2,\label{eq:reg:metrics}
\end{align}
holds for all $\theta\in\mathbb{R}^k$.

\noindent\textbf{Condition \labelcref{Ass:Metric_balls_contained_in_KL_balls}.}
First note that for arbitrary $\theta\in\mathbb{R}^k$
\begin{align}
\ell_n(\theta)-\ell_n(\theta^o)&=
\frac{1}{2}\sum_{i=1}^n(Y_i-f_{\theta^o}(X_i))^2- \frac{1}{2}\sum_{i=1}^n(Y_i-f_{\theta}(X_i))^2\nonumber\\
 &=-\sum_{i=1}^n\Big((f_{\theta}(X_i)-f_{\theta^o}(X_i))^2/2- (Y_i-f_{\theta^o}(X_i))(f_{\theta}(X_i)-f_{\theta^o}(X_i))\Big)\nonumber\\
 &=-\sum_{i=1}^n\big(f_{\theta}(X_i)-f_{\theta^o}(X_i)\big)^2/2- \sum_{i=1}^n\eps_i(f_{\theta}(X_i)-f_{\theta^o}(X_i))\nonumber\\
&\qquad- \sum_{i=1}^n(f_0(X_i)-f_{\theta^o}(X_i))(f_{\theta}(X_i)-f_{\theta^o}(X_i))\nonumber\\
&=-\sum_{i=1}^n\big(f_{\theta}(X_i)-f_{\theta^o}(X_i)\big)^2/2- \sum_{i=1}^n\eps_i\big(f_{\theta}(X_i)-f_{\theta^o}(X_i)\big), \label{eq: help:exp:term}
\end{align}
where in the last line we used that $f_{\theta^o}$ is the orthogonal projection of $f_0$ to $\Theta_k=\{\sum_{i=1}^k \theta_i \phi_i:\, \theta\in\mathbb{R}^k\}$ with respect to the empirical Euclidean norm $d_{n}$. Then by taking $\EE_{f_0}^{|X}$-expectation on both sides of \eqref{eq: help:exp:term} we get that

\begin{align*}
\EE_{f_0}^{|X}\big(\ell_n(\theta)-\ell_n(\theta^o)\big)=n\|f_{\theta}- f_{\theta^o}\|_n^2/2\leq c_m n\|\theta-\theta^o\|_2^2.
\end{align*}
Similarly $\EE_{f_0}^{|X} \big[ \ell_n(\theta)-\ell_n(\theta^o)- \EE_{f_0}^{|X}(\ell_n(\theta)-\ell_n(\theta^o)) \big]^2\leq 2c_m n\|f_{\theta}- f_{\theta^o}\|_2^2$,
hence $B_k(\theta^o,\sqrt{k/n},\|\cdot\|_2)\subset \mathcal{S}_n(k,c_m,2c_m,2)$.\\

\noindent\textbf{Condition \labelcref{Ass:Likelihood_ratio_control}.}
In view of assertion \eqref{eq: help:exp:term} and using Cauchy-Schwarz inequality (as in  inequality (A.3) of the supplementary material of \cite{rousseauAsymptoticFrequentistCoverage2020} we arrive at
\begin{align*}
\ell_n(\theta)-\ell_n(\theta^o)&=
- n\|f_{\theta}-f_{\theta^o}\|_n^2/2- \eps^T \Phi_{n,k}(\theta-\theta^o)  \\
&\leq- n\|f_{\theta}-f_{\theta^o}\|_n^2/2+\|\eps^T \Phi_{n,k}\|_2\|\theta^o-\theta\|_2.
\end{align*}
We show below that with $\PP_{f_0}^{|X}$-probability tending to one
\begin{align}
\|\eps^T\Phi_{n,k}\|_2^2\leq C kn. \label{eq: UB:cross:term}
\end{align}
Hence on the same event we get that
\begin{align*}
\ell_n(\theta)-\ell_n(\theta^o)&\leq \sqrt{n} \|f_{\theta}-f_{\theta^o}\|_n\Big(C\sqrt{c_m} M_\eps \sqrt{k}-\sqrt{n}\|f_{\theta}-f_{\theta^o}\|_n/2  \Big)\\
&\leq 2c_mC^2M_{\eps}^2k.
\end{align*}

It remained to prove that \eqref{eq: UB:cross:term} holds with probability tending to one. Note that in view of assertion \eqref{eq:reg:metrics} on an event $B_n$, with  $\PP_{f_0}(B_n)\rightarrow 1$ we get for $\eps\sim N_k(0,I_k)$
\begin{align*}
\|\eps^T \Phi_{n,k}\|_2^2=n \eps^T\Sigma_{n,k} \eps\leq 2c_m n\|\eps\|_2^2.
\end{align*}
Then by the properties of the $\chi^2_k$ distribution the right hand side of the preceding display is bounded from above by $4 c_m^2 nk$ with probability tending to one as $k$ tends to infinity.\\

\noindent\textbf{Condition \labelcref{Ass:Small_ball_control}}
In view of assertion \labelcref{eq:reg:metrics} on the event $B_n$ we get that
\begin{align*}
 B(f_\theta,\delta_0\sqrt{k/n},\|\cdot\|_n)\subset B_k(\theta,\sqrt{2c_m}\delta_0\sqrt{k/n},\|\cdot\|_2),\\  B(f_{\theta^o},\sqrt{k/n},\|\cdot\|_n)\supset B_k(\theta^o,\sqrt{k/n}/\sqrt{2c_m},\|\cdot\|_2).
\end{align*}
Next note that in view of \eqref{eq:reg:metrics} on an event $B_n$ with $\PP_{f_0}(B_n)\rightarrow 1$ we have
\begin{align}
(2c_m)^{-1}\|\theta^o\|_2^2\leq \|f_{\theta^o}\|_n^2\leq \|f_0\|^2_n\leq M,\label{eq:help:prior:mass:metric}
\end{align}
resulting in $|\theta^o_j|\leq \sqrt{2c_m M}< M'$. Therefore the density of the prior is bounded away from zero and infinity on a neighbourhood of $\theta^o$ which in turn implies that
\begin{align*}
\sup_{\theta\in B(f_{\theta^o},M,\|\cdot\|_n)}\frac{\Pi_k \big(B(f_\theta,\delta_0\sqrt{k/n},\|\cdot\|_n) \big)}{\Pi_k \big(B(f_{\theta^o},\sqrt{k/n},\|\cdot\|_n) \big)}
&\lesssim \frac{ \text{Vol}\big(B_k(\theta,\sqrt{2c_m}\delta_0\sqrt{k/n},\|\cdot\|_2)\big)}{\text{Vol}\big(B_k(\theta^o,\sqrt{k/n}/\sqrt{2c_m},\|\cdot\|_2)\big)}\\
&\lesssim e^{c k \log (2c_m\delta_0) }.
\end{align*}

\end{proof}

\subsection{Misspecified contraction rates}

Finally, we derive a contraction rate result for the posterior in our misspecified setting. We assume that our true model parameter is $f_0$, which however, does not necessarily belong to our model $\Theta_k=\{f_{\theta}=\sum_{i=1}^k \theta_i\phi_i:\, \theta\in\mathbb{R}^k\}$. Let us denote by $f^*=f_{\theta^*}$ the $L_2$-projection of $f_0$ into the subspace $\Theta_k$. We show below that the posterior contracts with the $L_2$-rate $\sqrt{k/n}$ around $f^*$ in the regression model.

\begin{thm}\label{thm:contraction}
Consider the random design nonparametric regression model with observations $\mathbb{D}_n=\big((X_1,Y_1),...,(X_n,Y_n)\big)$ and assume that the Gram matrix
$\Sigma_{k}$ given in \eqref{def:gram:mtx} satisfies Assumption \ref{Ass:Equivalence_of_metrics}
and that the prior $\Pi_{k}$ satisfies \eqref{assump:prior}. Then
   \[
    \limsup_{n \rightarrow \infty}
    \sup_{f_0 \in S^{\beta}_d(M)\cap L_\infty(M)}
    \EE_{f_0}
    \Pi_k\big(\theta\in\mathbb{R}^k :\,
      \| f_{\theta} - f^* \|_2 \geq M_n \sqrt{k/n} | \mathbb{D}_n
    \big)
    = 0
  \]
  for all $M_n \rightarrow \infty$.
\end{thm}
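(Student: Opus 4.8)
The plan is to work conditionally on the design $X$ and on a high-probability event, and to estimate directly the ratio of integrals defining the posterior, exploiting the explicitly Gaussian shape of the regression likelihood rather than invoking a black-box contraction theorem. Writing the posterior mass of the ``far'' set $A_n=\{\theta:\|f_\theta-f^*\|_2\ge M_n\sqrt{k/n}\}$ as
\[
\Pi_k(A_n\mid\mathbb{D}_n)=\frac{\int_{A_n}e^{\ell_n(\theta)-\ell_n(\theta^*)}\,d\Pi_k(\theta)}{\int_{\Theta_k}e^{\ell_n(\theta)-\ell_n(\theta^*)}\,d\Pi_k(\theta)},
\]
I would first use an analogue of the likelihood expansion \eqref{eq: help:exp:term} to write $\ell_n(\theta)-\ell_n(\theta^*)=-\tfrac n2\|f_\theta-f^*\|_n^2+v^T(\theta-\theta^*)$, where the linear remainder $v$ collects the noise term $\Phi_{n,k}^T\eps$ and the misspecification term $\Phi_{n,k}^T(\mathbf f_0-\mathbf f^*)$ (vectors of evaluations), the latter being centered since $\langle f_0-f^*,\phi_j\rangle_2=0$ for every $j$ by definition of the $L_2$-projection $f^*$. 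The two good events I need are the norm equivalence $\|f_\theta-f^*\|_n\asymp\|f_\theta-f^*\|_2\asymp\|\theta-\theta^*\|_2$, which follows from Assumption \ref{Ass:Equivalence_of_metrics} together with the Rudelson bound exactly as in \eqref{eq:reg:metrics}, and the control $\|v\|_2\lesssim\sqrt{kn}$: the noise part is \eqref{eq: UB:cross:term}, while the bias part satisfies $\EE_{f_0}\|\Phi_{n,k}^T(\mathbf f_0-\mathbf f^*)\|_2^2\lesssim n\,\|f_0-f^*\|_\infty^2\,\mathrm{tr}\,\Sigma_k\lesssim nk$ by boundedness of the basis.

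For the denominator, on these events and for $\theta\in B_k(\theta^*,\sqrt{k/n})$ the quadratic term is at least $-Ck$ and $|v^T(\theta-\theta^*)|\le\|v\|_2\sqrt{k/n}\lesssim k$, so the integrand is $\ge e^{-C'k}$; since $g$ is bounded below on the relevant compact, $\Pi_k\big(B_k(\theta^*,\sqrt{k/n})\big)\ge\underline c^{\,k}\,\mathrm{Vol}\,B_k(\theta^*,\sqrt{k/n})\gtrsim e^{-(k/2)\log n+O(k)}$. Hence the denominator is $\gtrsim e^{-(k/2)\log n+O(k)}$ with $\PP_{f_0}$-probability tending to one — this is precisely the event $\Omega_n$ used in the proof of Theorem \ref{thm:general:coverage}.

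For the numerator I would slice $A_n$ into dyadic shells $\{R_j\le\|\theta-\theta^*\|_2<R_{j+1}\}$ with $R_j=2^jM_n\sqrt{k/n}$. On shell $j$ the quadratic term is $\le-c\,nR_j^2=-c\,2^{2j}M_n^2k$, whereas the linear term is $\le\|v\|_2R_{j+1}\lesssim 2^{j}M_nk$, so for $M_n\to\infty$ the quadratic dominates and the integrand is $\le e^{-c'2^{2j}M_n^2k}$; the prior mass of the shell is $\le\overline c^{\,k}\,\mathrm{Vol}\,B_k(\theta^*,R_{j+1})=\overline c^{\,k}\,(2^{j+1}M_n)^k\,\mathrm{Vol}\,B_k(\theta^*,\sqrt{k/n})$. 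Dividing by the denominator, the factor $\mathrm{Vol}\,B_k(\theta^*,\sqrt{k/n})$ — and with it the whole $e^{-(k/2)\log n}$ — cancels, leaving the bound $\sum_{j\ge0}(2^{j+1}M_n)^k\,e^{O(k)}\,e^{-c'2^{2j}M_n^2k}$. For $M_n\to\infty$ each summand is $\le e^{-c''2^{2j}M_n^2k}$, the series is summable and tends to $0$; taking $\EE_{f_0}$ and adding the vanishing probability of the complement of the good events finishes the proof.

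The crux is this cancellation of the $(k/n)^{k/2}$ prior-volume factor between numerator and denominator: it is what delivers the clean rate $\sqrt{k/n}$ rather than the $\sqrt{k\log n/n}$ a generic prior-mass/testing argument would give, and it is the reason I carry out the peeling by hand. The two secondary difficulties are the misspecification cross-term $\langle f_0-f^*,f_\theta-f^*\rangle_n$, which vanishes in $L_2$ but not empirically and must be absorbed into $v$ using supremum-norm boundedness of $f^*$ (equivalently, one may re-center at the empirical $\|\cdot\|_n$-projection $\theta^o$, for which this term is identically zero, and then transfer to $f^*$ via $\|f_{\theta^o}-f^*\|_2=O_{\PP_{f_0}}(\sqrt{k/n})$), and the noncompactness of $\Theta_k=\RR^k$, which is harmless because for shells of constant-order radius the quadratic term forces suppression of order $e^{-cn}\ll e^{-(k/2)\log n}$.
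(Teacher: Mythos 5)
Your argument is essentially correct, but it takes a genuinely different route from the paper. The paper does not compute the posterior ratio by hand: it verifies the two conditions \eqref{eq: cond:1:contr} (a prior-mass \emph{ratio} bound over the shells $\{j\eps_n\le\|f_\theta-f^*\|_2\le 2j\eps_n\}$) and \eqref{eq: cond:2:contr} (local entropy of each shell bounded by $k$), and then invokes the misspecified-model contraction theorem of \cite{ghosalFundamentalsNonparametricBayesian2017} (Theorem 8.36 together with Lemmas 8.38 and 8.41), so the decay of the likelihood on the far shells is supplied by abstract tests rather than by the explicit Gaussian quadratic expansion you use. Your dyadic peeling plays exactly the role of \eqref{eq: cond:1:contr} --- the cancellation of the $(k/n)^{k/2}$ volume factor you emphasize is precisely why the ratio form of the prior-mass condition holds with $e^{j^2k/8}$ on the right-hand side --- so your remark that a ``generic prior-mass/testing argument'' would only give $\sqrt{k\log n/n}$ is not quite fair to the paper's route, which also delivers the clean $\sqrt{k/n}$. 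What your approach buys is self-containedness and transparency about where misspecification enters: the cross-term $\sum_i(f_0-f^*)(X_i)(f_\theta-f^*)(X_i)$, which the paper hides inside the testing machinery, is handled explicitly as a centered linear statistic. What the paper's approach buys is portability to non-Gaussian likelihoods (they explicitly want to reuse the general lemmas for, e.g., classification). Two caveats on your version. First, your variance bound for the bias part of $v$ uses $\|f_0-f^*\|_\infty\lesssim 1$; boundedness of $f^*$ in sup-norm does not follow from Assumption \ref{Ass:Equivalence_of_metrics} alone, so you should either invoke the boundedness of the basis ($\sum_j\phi_j(x)^2\le Ck$, as assumed in Lemma \ref{thm:cov:reg} and guaranteed by Assumption \ref{ass:dnn}), which gives $\EE_{f_0}\|\Phi_{n,k}^T(\mathbf f_0-\mathbf f^*)\|_2^2\le n\,Ck\,\|f_0-f^*\|_2^2\lesssim nk$ directly, or carry out the re-centering at $\theta^o$ that you mention in passing; the theorem as literally stated does not contain the hypothesis you need, though the paper's own proof has the same implicit reliance on uniform boundedness. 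Second, since $g$ is only assumed bounded on compacta, the bound $\Pi_k(\mathrm{shell}_j)\le\overline c^{\,k}\mathrm{Vol}$ is not available for distant shells; your fix (bounding the prior mass by $1$ and using that the quadratic suppression $e^{-cnR^2}$ beats the denominator $e^{-(k/2)\log n-O(k)}$ once $R\gtrsim\sqrt{(k\log n)/n}$) is the right one and should be stated as part of the proof rather than as an afterthought.
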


\begin{proof}

For ease of notation we set $\eps_n=\sqrt{k/n}$. Then we show below that the following two inequalities hold for some constant $J>0$,
\begin{align}
\frac{\Pi_k\left(\theta:\, j \eps_n\leq \| f_{\theta} - f^* \|_2 \leq 2 j \eps_n \right)}
{\Pi_k\left( B(f^*, \eps_n,\|\cdot\|_2) \right)}
  \leq e^{ j^2k/8},\qquad \text{for all $j\geq J$},\label{eq: cond:1:contr}\\
 \log N(\epsilon,     \{f_\theta : \epsilon     < \|f^* - f_{\theta}\|_2 \leq 2 \epsilon \},\|\cdot\|_2) \leq k, \quad\text{for  all $\epsilon > 0$}.\label{eq: cond:2:contr}
\end{align}
The function class $\mathcal{F}=S^{\beta}_d(M)\cap L_{\infty}(M)$ is closed, convex, and uniformly bounded. Furthermore, since the Gaussian noise satisfies $\EE_{f_0}e^{M|\eps_i|}<\infty$ for all $M>0$, in view of Lemma 8.41 of \cite{ghosalFundamentalsNonparametricBayesian2017} condition (8.52) of \cite{ghosalFundamentalsNonparametricBayesian2017} holds.  Therefore, in view of Lemma 8.38 of \cite{ghosalFundamentalsNonparametricBayesian2017} the logarithm of the covering number for testing under misspecification is bounded from above by $\log N(\epsilon,     \{f_\theta : \epsilon     < \|f^* - f_{\theta}\|_2 \leq 2 \epsilon \},\|\cdot\|_2)$, which in turn is bounded by $k$ following from \eqref{eq: cond:2:contr}.
Then our statement follows by applying Theorem 8.36 of \cite{ghosalFundamentalsNonparametricBayesian2017} (with $\eps_n=\bar\eps_n=k/n$, $d=\|\cdot\|_2$, $\mathcal{P}_{n,1}=\mathcal{F}$, $\mathcal{P}_{n,2}=\emptyset$).

It remained to verify conditions \eqref{eq: cond:1:contr} and \eqref{eq: cond:2:contr}.\\

\noindent\textbf{Proof of \eqref{eq: cond:1:contr}}. First note that in view of condition \ref{Ass:Equivalence_of_metrics}
\begin{align*}
\frac{\Pi_k\big(\theta:\, j \eps_n\leq \| f_{\theta} - f^* \|_2 \leq 2 j \eps_n \big)}
{\Pi_k\big( B(f^*, \eps_n,\|\cdot\|_2) \big)}
\leq \frac{\Pi_k\big(B_k(\theta^*, 2j\sqrt{c_m}\eps_n,\|\cdot\|_2) \big)}
{\Pi_k\left( B_k(\theta^*, \eps_n/\sqrt{c_m},\|\cdot\|_2) \right)}.
\end{align*}
Furthermore, note that the prior density is bounded from above and below by  $\overline{c}^{k}$ and $\underline{c}^{k}$, respectively, in a neigbourhood of $f_{\theta^*}$ following from assumption \eqref{assump:prior} and by similar argument as in \eqref{eq:help:prior:mass:metric}. Therefore the prior probability of a given set $A$ can be upper and lower bounded by the Euclidean volume of $A$ times $\overline{c}^{k}$ and $\underline{c}^{k}$, respectively. This implies that the preceding display can be further bounded from above by
\begin{align*}
\Big( \frac{\overline{c}}{\underline{c}} \Big)^{k}\frac{\text{Vol}\Big(B_k(\theta^*, 2j\sqrt{c_m}\eps_n,\|\cdot\|_2) \Big)}{\text{Vol}\Big(B_k(\theta^*,\eps_n/\sqrt{c_m} ,\|\cdot\|_2) \Big)}\leq \Big( \frac{2c_mj\overline{c}}{\underline{c}} \Big)^{k} \leq e^{ j^2k/8},
\end{align*}
for all $j\geq J$, for $J$ large enough.\\

\noindent\textbf{Proof of \eqref{eq: cond:2:contr}}.
First note that by Assumption \ref{Ass:Equivalence_of_metrics} 
\begin{align*}
  \log
 & N(\epsilon,     \{f_\theta : \epsilon     < \| f_\theta - f^* \|_2 \leq 2 \epsilon \},\|\cdot \|_2)\\
&\qquad\qquad \leq
 \log   N(\epsilon  c_m^{-1/2}, \{  \theta : \epsilon c_m^{-1/2} < \| \theta - \theta^* \|_2 \leq 2 \epsilon c_m^{1/2} \}, \| \cdot \|_2 ).
\end{align*}
Then  in view of the standard bound on the local entropies of $k$-dimensional Euclidean balls the right hand side is further bounded by constant times $k$, finishing the proof of our statement.\\

\end{proof}

\end{document}